\journalname{Numerical Algorithms}
\begin{document}

\title{Fitted Finite Volume Method for a Generalized Black-Scholes Equation Transformed on Finite Interval}

\author{Radoslav Valkov}

\institute{Radoslav Valkov \at Faculty of Mathematics and Informatics, University of Sofia, \\ 1000 Sofia, Bulgaria, \\ \email{rvalkov@fmi.uni-sofia.bg}}

\date{Received: date / Accepted: date}

\maketitle

\begin{abstract}
A generalized Black-Scholes equation is considered on the semi-axis. It is transformed on the interval $(0,1)$ in order to make the computational domain finite. The new parabolic operator degenerates at the both ends of the interval and we are forced to use the G\"{a}rding inequality rather than the classical coercivity. A fitted finite volume element space approximation is constructed. It is proved that the time $\theta$-weighted full discretization is uniquely solvable and positivity-preserving. Numerical experiments, performed to illustrate the usefulness of the method, are presented.
\end{abstract}

\keywords{generalized Black-Scholes equation \and degenerate parabolic equation \and G\"{a}rding coercivity \and fitted finite volume method \and positivity}

\section{Introduction}

The famous equation, proposed by F. Black, M. Scholes and R. Merton, see \cite{Duffy,Sevc,WHD}, is

\begin{equation*}
\frac{\partial V}{\partial t} + \frac{1}{2} \sigma^{2} S^{2} \frac{\partial^{2} V}{\partial S^{2}} + r \left(\frac{\partial V}{\partial S } - SV \right)=0, \; S \in (0,\infty), \; t \in [0,T).
\end{equation*}

This is a typical example of a \emph{degenerate} parabolic equation \cite{OR}. It is well known \cite{Sevc,WHD,Zhu} that it can be transformed to the heat equation that allows us to overcome the degeneracy at $S=0$. Many numerical methods, based on classical finite difference schemes, applied to constant coefficients heat equations, are developed \cite{AP,RS}. However, when the problem has space-dependant coefficients $\sigma$ and $r$ one can not transform the Black-Scholes equation to a standard heat equation. Finite difference and finite element methods have been applied in \cite{A,CL1,CL2,CS,CV,Ehrhardt1,Ehrhardt2,Sulli,Vasq,Zhu} in order to solve this type of generalized Black-Scholes equations. In \cite{Kadalbajoo} cubic $B$-splines are implemented. Often, the convergence of the full discretization is verified by numerical examples only.

An effective method, that resolves the \emph{degeneracy}, is proposed by S. Wang \cite{SW} for the Black-Scholes equation with Dirichlet boundary conditions. The method is based on a finite volume formulation of the problem, coupled with a fitted local approximation to the solution and an implicit time-stepping technique. The local approximation is determined by a set of two-point boundary value problems (BVPs), defined on the element edges. This fitted technique originates from one-dimensional computational fluid dynamics \cite{Morton}.

A modification of the discretization, originally presented in \cite{SW}, was proposed by L. Angermann \cite{A} such that the method adequately treats the proper (natural) boundary condition at $x=0$. Similar space discretization is derived in \cite{CV} for a degenerate parabolic equation in the zero-coupon bond pricing.

The domain of $S$ is the half real line. For numerical computation it is desirable to have a \emph{finite computational domain}. The transformation in the next section converts $S \in (0,\infty)$ to $x \in (0,1)$, \emph{decreasing significantly the computational costs}. Also, for a call option, the solution $V(S,t)$ is not bounded and from the numerical methods' point of view the problem transformation on a finite interval is better. The resulting equation has variable coefficients but this is not an essential difficulty for the numerical computation. However, the transformed equation \emph{degenerates} at both ends of the finite interval.

The present paper deals with a degenerate parabolic equation, \eqref{TransfEq}, derived after transformation of the generalized Black-Scholes equation \eqref{OrigEq} to a finite interval. The degeneration at the both ends of the interval does not allow the use of the Poincar\'{e}-Friedrichs inequality and we are forced to investigate the differential problem with the G\"{a}rding inequality rather than classical coercivity \cite{GV}.

This paper is organized as follows. The model problem is presented in Section 2, where we discuss our basic assumptions and some properties of the solution. The space discretization method is developed in Section 3. Section 4 is devoted to the time discretization, where we show that the system matrix on each time-level is an $M$-matrix so that the discretization is monotone. Numerical experiments are discussed in the last section.

Some results, concerning the case of the transformed Black-Scholes equation above, are reported in \cite{RV}.

\section{The transformed problem}

We consider the generalized Black-Scholes equation \cite{Sevc,WHD}:

\begin{equation}\label{OrigEq}
\frac{\partial V}{\partial t} + \frac{1}{2}\sigma^{2}(t) S^{2}\frac{\partial^{2}V}{\partial S^{2}} + (r(t)S - D(S,t))
\frac{\partial V}{\partial S} - r(t)V = 0, \; (S,t)  \in (0,\infty ) \times (0,T),
\end{equation}
where $\sigma = \sigma (t)$ denotes the \emph{volatility} of the asset, $r=r(t)$ is the \emph{risk-free interest rate}, $D=D(S,t)$ denotes the \emph{dividend} of the dividend-paying asset. We also introduce $d=d(S,t)$ such that $D(S,t)= S d(S,t)$, where \emph{the dividend rate} $d=d(S,t)$ is continuously differentiable with respect to $S$.

There are various choices for the final (\emph{payoff}) condition, depending on the models. In the case of vanilla European option

\begin{equation}\label{FinalC1}
V(S,T)=\left\{
       \begin{array}{ll}
         \max (S-E,0) & \mbox{for a call option}, \\
         \max (E-S,0) & \mbox{for a put option},
       \end{array}
     \right.
\end{equation}
where $E$ is the strike price. Another example is the \emph{bullish vertical spread} payoff, defined by
\begin{equation}\label{FinalC2}
V(S,T)= max(S-E_1,0)-max(S-E_2,0),
\end{equation}
where $E_1$ and $E_2$ are two exercise prices, satisfying $0<E_1<E_2$. This represents a portfolio of buying one call option with exercise price $E_1$ and issuing one call option with the same expiry date but a larger exercise price $E_2$. For detailed discussion on this, we refer to \cite{WHD}.

We introduce the transformation \cite{Zhu}

\begin{equation}\label{ZhuTransform}
x = \frac{S}{S+P_{m}}, \; u(x,t) = \frac{V(S,t)}{S+P_{m}}, \; \tau = T-t.
\end{equation}
The constant $P_m$ is called \emph{mesh parameter}. It controls the distribution of the mesh nodes w.r.t. $S$ on the interval $(0,\infty)$. The higher the value of S, that we are interested in, the higher value of $P_m$ should be in order to obtain a reasonable accuracy. In the case of a call option, because of the nature of the terminal condition, $P_m$ should be equal to $E$.

The inverse transformation is $S=P_{m}x/(1-x)$ and after plugging it in the Black-Scholes equation, \eqref{OrigEq}, we obtain:

\begin{eqnarray}\label{TransfEq}
\begin{split}
& \frac{\partial u}{\partial t} - \frac{1}{2} \sigma^{2}(t) x^{2}(1-x)^{2} \frac{\partial^{2} u}{\partial x^{2}}
  - x(1-x)(r(t) - d(x,t))\frac{\partial u}{\partial x} \\
& + ((1-x)r(t) + x d(x,t))u = 0, \; (x,t) \in \Omega_{T} = \Omega \times (0,T), \Omega=(0,1).
\end{split}
\end{eqnarray}

We return to the original notation of the variable $t$ for the sake of simplicity. The initial data for a call option reads

\begin{equation}\label{InitialTransf}
u(x,0) = u_{0}(x) = \max (2x-1,0).
\end{equation}

Being different from the classical parabolic equations, in which the principle coefficient is assumed to be strictly positive, the parabolic equation \eqref{TransfEq} belongs to the second-order differential equations with non-negative characteristic form \cite{OR}. The main difficulty of such kind of equations is the \emph{degeneracy} \cite{WFV}. It can be easily seen that at $x=0$ and $x=1$ \eqref{TransfEq} degenerates to

\begin{equation}\label{NaturalBC}
\left. \frac{\partial u}{\partial t} \right\vert_{x=0}= -r(t)u(0,t), \; \left. \frac{\partial u}{\partial t} \right\vert_{x=1} = -d(1,t)u(1,t).
\end{equation}

It is well known by the Fichera theory for degenerate parabolic equations \cite{OR} that at the degenerate boundaries $x=0$ and $x=1$ the boundary conditions should not be given.

For theoretical analysis of our discrete problem as well as for the construction of a fitted finite volume mass-lumping discretization we need to consider weak solutions of \eqref{TransfEq}. We shall use the standard notations for the function spaces $C^{m}(\Omega)$ and $C^{m}({\overline{\Omega}})$ of which a function and it's derivatives up to order $m$ are continuous on $\Omega$ (respectively ${\overline{\Omega}}$). The space of square-integrable functions we denote by $L^{2}(\Omega)$ with the usual $L^{2}$-norm $\Vert \cdot \Vert$ and the inner product $(\cdot,\cdot)$. We also use the function space $L^{\infty}(\Omega)$ with the norm $\Vert \cdot \Vert_{\infty}$. To handle the degeneracy in \eqref{TransfEq}, we introduce the following weighted $L_{w}^{2}$-norm
\begin{equation*}
\Vert v \Vert_{0,w} := (\int_{0}^{1} x^{2}(1-x)^{2} v^{2} dx )^{1/2}
\end{equation*}
with corresponding inner product $(u,v)_{w}$. Using $L^{2}(\Omega)$ and $L_{w}^{2}(\Omega)$, we define the weighted Sobolev space
$H_{w}^{1}(\Omega) := \{v : v \in L^{2}(\Omega), \; v' \in L_{w}^{2}(\Omega)\}$, where $v'$ denotes the weak derivative of $v$. Let $\Vert \cdot \Vert_{1,w}$ be the functional on $H_{w}^{1}(\Omega)$, defined by $\Vert v \Vert_{1,w} = (\Vert v \Vert_{0}^{2} + \Vert v' \Vert_{0,w}^{2})^{1/2}$. Then it is easy to see that
$\Vert \cdot \Vert_{1,w}$ is a norm on $H^{1}(\Omega)$; it is called weighted $H^{1}$-norm on $H_{w}^{1}(\Omega)$. Furthermore, using the inner products in
$L^{2}(\Omega)$ and $L_{w}^{2}(\Omega)$, we define a weighted inner product on $H_{w}^{1}(\Omega)$ by
$(\cdot , \cdot )_{H} := (\cdot , \cdot) + (\cdot , \cdot )_{w}$ and, consequently, the pair $(H_{w}^{1}(\Omega),(\cdot , \cdot)_{H})$ is a Hilbert space. Also, $H_{w}^{1}(\Omega)$ contains the conventional Sobolev space $H^{1} (\Omega)$ as a proper subspace.

We rewrite \eqref{TransfEq} in divergent form

\begin{eqnarray}\label{DivEq}
\begin{split}
& \frac{\partial u}{\partial t} - \frac{\partial }{\partial x}\left(x(1-x) \left(a(x,t)\frac{\partial u}{\partial x} + b(x,t)u \right)\right) + c(x,t)u = 0, \\
& a(x,t) = \frac{1}{2}\sigma^{2} (t) x(1-x), \; b(x,t) = r(t) - d(x,t) + \sigma^{2}(t) (2x-1), \\
& c(x,t) = (2-3x) r(t) - (6x^{2} - 6x + 1)\sigma^{2}(t) - (1-3x)d(x,t)- x(1-x) \frac{\partial d}{\partial x}.
\end{split}
\end{eqnarray}

Let us introduce for $w,v \in H_{w}^{1}(\Omega)$ the bilinear form
\begin{equation*}
  A(w,v;t) := (aw'+ bw, v') + (cw,v) = (x(1-x) \rho (w) , v') + (cw,v).
\end{equation*}

Here the notation $w' = \frac{\partial w}{\partial x}$ is used and the function $\rho (w)=aw' + bw$ is the \emph{weighted flux density}, associated with $w$.
We are in position to state the variational formulation of problem \eqref{TransfEq},\eqref{InitialTransf}:

\emph{Find $u(t)\in H_{w}^{1}(\Omega)$, such that for all $v \in H_{w}^{1}(\Omega)$}
\begin{equation} \label{VarForm}
\left( \frac{\partial u(t) }{\partial t}, v \right) + A(u(t),v;t) =0 \; \mbox{a.e. in} \; (0,T) \; \mbox{and} \; u(\cdot,0)=u_{0}.
\end{equation}

The following result provides the \emph{weak coercivity and continuity} of the bilinear form $A(\cdot,\cdot,t)$.

\begin{lemma}\label{Garding} Let $w,v \in C^{1} ([0,1])$. Then:
\begin{enumerate}
  \item
    there exist constants $C_{1} > 0$ and $C_{2} > 0$ such that
    \begin{eqnarray*}
    \vert A(w,v,t) \vert \leq C_{1} \Vert w \Vert_{H_{w}^{1}(0,1)}\Vert v \Vert_{H^{1}(0,1)} \label{BoundEst1} \\
    \vert A(w,v,t) \vert \leq C_{2} \Vert w \Vert_{H^{1}(0,1)}\Vert v \Vert_{H^{1}_{w}(0,1)} \label{BoundEst2}
    \end{eqnarray*}
    for any $ t \in [0,T]$;
  \item
    \textbf{(G\"{a}rding inequality)} there exist constants $\alpha > 0$ and $\gamma > 0$ such that
    \begin{equation*}
    \vert A(v,v,t) \vert \geq \alpha \Vert v \Vert_{H_{w}^{1}(0,1)}^{2} - \gamma \Vert v \Vert_{L^{2}(0,1)}^{2},
    \end{equation*}
\end{enumerate}
uniformly with respect to $t \in [0,T]$.
\end{lemma}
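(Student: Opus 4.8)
The plan is to prove all three inequalities by direct estimation of the three terms appearing in $A(w,v;t)=(x(1-x)a(x,t)w',v')+(x(1-x)b(x,t)w,v')+(cw,v)$, exploiting that on $[0,1]$ the functions $a$, $b$, $c$ and their relevant factors are bounded: indeed $a(x,t)=\tfrac12\sigma^2(t)x(1-x)$, $b$ and $c$ are polynomials in $x$ with coefficients continuous (hence bounded) on $[0,T]$, and $\partial d/\partial x$ is continuous on $\overline\Omega\times[0,T]$ by hypothesis. Set $M:=\sup\{|a|,|b|,|c|,\dots\}$ over $\overline\Omega\times[0,T]$ for the constants involved. For the continuity estimates I would write $x(1-x)a w' = \tfrac12\sigma^2(t)\,x^2(1-x)^2 w'$, so that the first term is controlled by $\tfrac12\sup_t\sigma^2(t)\,\|w'\|_{0,w}\,\|(1-x)\, \text{-weighted } v'\|$; more carefully, $|(x(1-x)aw',v')|\le \tfrac12\sup\sigma^2\,\|x(1-x)w'\|\,\|x(1-x)v'\|\le \tfrac12\sup\sigma^2\,\|w'\|_{0,w}\,\|v'\|$ after absorbing one bounded factor $x(1-x)\le 1$, giving a bound by $\|w\|_{1,w}\|v\|_{H^1}$. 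The mixed term $|(x(1-x)bw,v')|\le M\,\|w\|_0\,\|x(1-x)v'\|\le M\|w\|_0\|v'\|_{0,w}$ and the zeroth-order term $|(cw,v)|\le M\|w\|_0\|v\|_0$; summing, both the first factor can be taken as $\|w\|_{1,w}$ with the second $\|v\|_{H^1}$ (using $\|x(1-x)v'\|\le\|v'\|$), or symmetrically the first as $\|w\|_{H^1}$ with the second $\|v\|_{1,w}$, which yields the two claimed bounds with explicit $C_1,C_2$ depending only on $M$ and $\sup_t\sigma^2(t)$.

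For the Gårding inequality I would start from $A(v,v;t)=\tfrac12\sigma^2(t)\,\|v'\|_{0,w}^2 + (x(1-x)b\,v,v') + (cv,v)$. The leading term already gives the coercive contribution $\tfrac12\sigma^2(t)\,\|v'\|_{0,w}^2\ge \tfrac12\sigma_{\min}^2\,\|v'\|_{0,w}^2$ provided we assume $\sigma(t)\ge\sigma_{\min}>0$ (a standard nondegeneracy assumption on the volatility, which should be in force). The two lower-order terms are then treated as perturbations: by Cauchy–Schwarz and the weighted Young inequality, $|(x(1-x)b\,v,v')|\le M\,\|v\|_0\,\|v'\|_{0,w}\le \tfrac{\sigma_{\min}^2}{4}\|v'\|_{0,w}^2 + \tfrac{M^2}{\sigma_{\min}^2}\|v\|_0^2$, while $|(cv,v)|\le M\|v\|_0^2$. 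Combining,
\begin{equation*}
A(v,v;t)\ \ge\ \frac{\sigma_{\min}^2}{4}\,\|v'\|_{0,w}^2\ -\ \Bigl(\frac{M^2}{\sigma_{\min}^2}+M\Bigr)\|v\|_0^2 ,
\end{equation*}
and adding and subtracting $\tfrac{\sigma_{\min}^2}{4}\|v\|_0^2$ to reconstitute the full $\|v\|_{1,w}^2=\|v\|_0^2+\|v'\|_{0,w}^2$ gives the result with $\alpha=\tfrac{\sigma_{\min}^2}{4}$ and $\gamma=\tfrac{M^2}{\sigma_{\min}^2}+M+\tfrac{\sigma_{\min}^2}{4}$. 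All constants are manifestly independent of $t$, so the estimates hold uniformly on $[0,T]$.

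Two remarks on technical points. First, the statement is for $w,v\in C^1([0,1])$; the forms and norms are all well defined there and every integration is over a bounded interval with bounded integrands, so no boundary terms arise and nothing needs the full weighted-Sobolev machinery at this stage — the density of $C^1([0,1])$ in $H^1_w(\Omega)$ then extends the inequalities to the whole space afterwards. Second, the main obstacle is really bookkeeping: one must be careful to always pair the degenerate weight $x(1-x)$ correctly so that a \emph{full} weight $x^2(1-x)^2$ sits on the genuinely degenerate derivative terms (yielding $\|\cdot\|_{0,w}$) while only a harmless bounded factor $x(1-x)\le\tfrac14$ is discarded elsewhere; the zeroth-order coefficient $c$ contains $x(1-x)\partial d/\partial x$, which is bounded precisely because of the stated $C^1$-regularity of $d$ in $S$, and this is the one place where the hypothesis on $d$ is essential. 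No genuine difficulty arises beyond matching weights and invoking $\sigma_{\min}>0$.
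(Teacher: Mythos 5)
Your argument is correct, but note that the paper itself contains no proof of this lemma: it simply defers to the reference [GV] (Gyulov--Valkov), so there is no in-paper argument to compare against. Your self-contained verification is the standard one and it works: splitting $A(w,v;t)=(x(1-x)aw',v')+(x(1-x)bw,v')+(cw,v)$, pairing the full weight $x^2(1-x)^2$ with the second-order term (giving $\Vert w'\Vert_{0,w}\Vert v'\Vert_{0,w}$) and placing the single factor $x(1-x)$ on $v'$ in the mixed term (giving $\Vert w\Vert_{0}\Vert v'\Vert_{0,w}$) yields both asymmetric continuity bounds, and the Young-inequality absorption of the mixed term into $\tfrac12\sigma^2\Vert v'\Vert_{0,w}^2$, followed by adding and subtracting $\alpha\Vert v\Vert_0^2$ to reconstitute $\Vert v\Vert_{1,w}^2$, gives the G\"{a}rding inequality; the absolute value in the statement is then immediate since $\vert A\vert\geq A$. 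The only caveats are at the level of hypotheses rather than of the argument: you need $\sigma^2(t)\geq\sigma_{\min}^2>0$ uniformly on $[0,T]$ (automatic if $\sigma$ is continuous and positive on the compact interval, but never stated explicitly in this paper --- it is an assumption inherited from [GV], and without it the inequality is false), and you need $c$ bounded on $\overline{\Omega}\times[0,T]$, which for the term $x(1-x)\partial d/\partial x$ amounts to a regularity/boundedness assumption on the \emph{transformed} dividend rate $d(x,t)$ that is again implicit here; you correctly flag both points, so they are not gaps, but they should be stated as standing assumptions if this proof were inserted in place of the citation. Your closing density remark is unnecessary for the lemma as stated (it is formulated only for $C^1([0,1])$) and would itself require justification, so it is best omitted.
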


\begin{proof} The proof is given in \cite{GV}.
\end{proof}

Owing to Lemma \ref{Garding} one can prove the following assertion \cite{GV}.

\begin{theorem}
Suppose that $u_0(x) \in H_{w}^{1}(\Omega)$. Then the problem \eqref{VarForm} has an unique solution.
\end{theorem}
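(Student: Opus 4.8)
The plan is to derive existence and uniqueness from Lemma \ref{Garding} by the Faedo--Galerkin method, after first trading the G\"{a}rding estimate for a genuine coercivity estimate. First I would substitute $u(x,t)=e^{\gamma t}\widetilde u(x,t)$ in \eqref{VarForm}; since $\partial_{t}u=e^{\gamma t}(\gamma\widetilde u+\partial_{t}\widetilde u)$, the function $\widetilde u$ solves the same problem with $A(\cdot,\cdot;t)$ replaced by $\widetilde A(w,v;t):=A(w,v;t)+\gamma(w,v)$, and part~2 of Lemma \ref{Garding} then gives $\widetilde A(v,v;t)\ge\alpha\|v\|_{1,w}^{2}$ for $v\in C^{1}([0,1])$, hence --- by density of $C^{1}([0,1])$ in $H^{1}_{w}(\Omega)$ --- for all $v\in H^{1}_{w}(\Omega)$ and uniformly in $t\in[0,T]$. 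Part~1 of Lemma \ref{Garding} supplies the matching boundedness, and the explicit coefficients in \eqref{DivEq} make $t\mapsto\widetilde A(w,v;t)$ measurable and bounded on $[0,T]$. Since $u\mapsto\widetilde u$ is a bijection of the solution class, it suffices to solve the $\widetilde A$-problem.

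Next I would run a Galerkin scheme in the Gelfand triple $H^{1}_{w}(\Omega)\hookrightarrow L^{2}(\Omega)\hookrightarrow(H^{1}_{w}(\Omega))^{*}$ (the first embedding is continuous because $\|v\|\le\|v\|_{1,w}$ and dense because $C^{1}([0,1])$ is dense in $L^{2}(\Omega)$): fix a basis $\{\varphi_{k}\}_{k\ge1}\subset C^{1}([0,1])$ total in $H^{1}_{w}(\Omega)$ and seek $u_{m}(t)=\sum_{k=1}^{m}g_{k,m}(t)\varphi_{k}$ with $(\partial_{t}u_{m},\varphi_{j})+\widetilde A(u_{m},\varphi_{j};t)=0$ for $1\le j\le m$ and $u_{m}(0)=P_{m}u_{0}$, where $P_{m}$ is the $L^{2}$-projection onto $\mathrm{span}\{\varphi_{1},\dots,\varphi_{m}\}$. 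This is a linear ODE system with coefficients integrable in $t$, hence uniquely solvable on $[0,T]$. Testing it with $u_{m}(t)$ gives $\tfrac12\tfrac{d}{dt}\|u_{m}\|^{2}+\widetilde A(u_{m},u_{m};t)=0$, whence, by the coercivity above and $\|P_{m}u_{0}\|\le\|u_{0}\|$, a bound for $u_{m}$ in $L^{\infty}(0,T;L^{2}(\Omega))\cap L^{2}(0,T;H^{1}_{w}(\Omega))$ independent of $m$; and for each fixed $j$ the scalar function $t\mapsto(u_{m}(t),\varphi_{j})$ is bounded in $H^{1}(0,T)$, since its derivative $-\widetilde A(u_{m},\varphi_{j};t)$ is bounded in $L^{2}(0,T)$ by part~1 of Lemma \ref{Garding}.

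Then I would pass to the limit: along a subsequence, $u_{m}\rightharpoonup\widetilde u$ in $L^{2}(0,T;H^{1}_{w}(\Omega))$ and $(u_{m}(\cdot),\varphi_{j})\to(\widetilde u(\cdot),\varphi_{j})$ in $C([0,T])$ for every $j$. To pass to the limit in $\widetilde A(u_{m},\varphi_{j};t)$ I would exploit the weighted-flux structure: since the principal coefficient $x(1-x)a(x,t)$ equals $\tfrac12\sigma^{2}(t)x^{2}(1-x)^{2}$, every bilinear pairing occurring in $\widetilde A(u_{m},\varphi_{j};t)$ is, up to bounded factors of $t$, an $L^{2}(\Omega)$-pairing of $u_{m}$ or of $x(1-x)u_{m}'$ against a fixed $L^{2}(\Omega)$-function, and $x(1-x)u_{m}'$ is bounded in $L^{2}(\Omega_{T})$ because $\|x(1-x)w'\|=\|w'\|_{0,w}$; so weak convergence is enough. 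Combined with $(u_{m}(0),\varphi_{j})=(P_{m}u_{0},\varphi_{j})\to(u_{0},\varphi_{j})$, this shows that $\widetilde u$ solves \eqref{VarForm} with $\widetilde A$ and that $\widetilde u(\cdot,0)=u_{0}$ --- the hypothesis $u_{0}\in H^{1}_{w}(\Omega)$ being what yields $\widetilde u\in C([0,T];L^{2}(\Omega))$ and makes the prescription of the initial value legitimate. Undoing the substitution, $u=e^{\gamma t}\widetilde u$ solves \eqref{VarForm}.

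Uniqueness is the easy part: by linearity the difference $w$ of two solutions solves the homogeneous $\widetilde A$-problem with $w(\cdot,0)=0$, and testing with $w(t)$ together with the coercivity gives $\tfrac{d}{dt}\|w\|^{2}\le0$, so $w\equiv0$. The hard part is everything caused by the degeneracy of \eqref{TransfEq} at $x=0$ and $x=1$: only the G\"{a}rding inequality is available instead of coercivity --- repaired cheaply by the exponential shift --- and, more seriously, $A(\cdot,\cdot;t)$ is not bounded on $H^{1}_{w}(\Omega)\times H^{1}_{w}(\Omega)$ in the naive sense, only in the mixed norms of Lemma \ref{Garding}; this is why the limit passage must be routed through the quantity $x(1-x)w'$, in which one has uniform control, rather than through $w'$ itself.
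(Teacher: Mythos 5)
The paper does not actually prove this theorem: it is stated with the remark that ``owing to Lemma \ref{Garding} one can prove the following assertion'' and the proof is delegated to the reference [GV]. Your exponential shift $u=e^{\gamma t}\widetilde u$ to convert the G\"{a}rding estimate into coercivity, followed by Faedo--Galerkin in the Gelfand triple $H^{1}_{w}(\Omega)\hookrightarrow L^{2}(\Omega)\hookrightarrow (H^{1}_{w}(\Omega))^{*}$, is precisely the standard argument that Lemma \ref{Garding} is set up to feed, so in substance you are reconstructing the intended proof rather than taking a different route, and the skeleton (ODE solvability of the Galerkin system, energy estimate from coercivity, weak limits through the quantity $x(1-x)u_{m}'$, uniqueness by testing with the difference) is sound.

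Two points need repair before this is airtight. First, you extend the G\"{a}rding inequality (and, implicitly, the variational identity for all $v\in H^{1}_{w}(\Omega)$) from $C^{1}([0,1])$ by density, but the continuity estimates of Lemma \ref{Garding} as stated are mixed, $\vert A(w,v,t)\vert\leq C\Vert w\Vert_{1,w}\Vert v\Vert_{H^{1}}$, and continuity with respect to the \emph{stronger} $H^{1}$-norm of one argument does not allow passage to the limit along sequences converging only in $\Vert\cdot\Vert_{1,w}$. What saves you is that the divergence-form pairing is in fact bounded on $H^{1}_{w}\times H^{1}_{w}$: since $x(1-x)a=\tfrac12\sigma^{2}x^{2}(1-x)^{2}$ and $b,c$ are bounded, one has $\vert A(w,v;t)\vert\leq C\Vert w\Vert_{1,w}\Vert v\Vert_{1,w}$ --- your own observation that every pairing is an $L^{2}$-pairing against $u$ or $x(1-x)u'$ proves exactly this, and it contradicts your closing claim that $A$ is bounded \emph{only} in the mixed norms; you should state and use the full $H^{1}_{w}\times H^{1}_{w}$ bound explicitly. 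Second, the density of $C^{1}([0,1])$ in $H^{1}_{w}(\Omega)$ is asserted but not obvious for a weight degenerating like $x^{2}(1-x)^{2}$ (it lies outside the Muckenhoupt class); it does hold, e.g.\ by dilating toward the midpoint and mollifying, but it deserves a proof or a citation. Finally, the hypothesis $u_{0}\in H^{1}_{w}(\Omega)$ is not what gives $\widetilde u\in C([0,T];L^{2}(\Omega))$ --- that already follows from $u\in L^{2}(0,T;H^{1}_{w})$ with $u'\in L^{2}(0,T;(H^{1}_{w})^{*})$; its role is to provide the second energy estimate (testing the Galerkin system with $\dot u_{m}$) yielding $\partial u/\partial t\in L^{2}(\Omega_{T})$, the regularity in which the paper's formulation, cf.\ Theorem \ref{WMP}, reads $(\partial_{t}u,v)$ as an honest $L^{2}$ inner product; that estimate should be carried out rather than alluded to.
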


\begin{theorem}\label{WMP}
(\textbf{Weak maximum principle}) Let $u\in L^{2}\left(0,T;H_{w}^{1}(\Omega)\right)$ be such that
\begin{enumerate}
\item $\frac{\partial u}{\partial t}\in L^{2}\left(\left(0,1\right) \times \left(0,T\right) \right)$;
\item the inequality
\begin{equation*}
\left( \frac{\partial u(t) }{\partial t}, v \right) + A(u(t),v;t) \geq 0, \qquad \forall v \in C_{0}^{\infty} \left(0,1\right), v \geq 0
\end{equation*}
holds for a.e. $\tau \in \left[0,T\right]$;
\item $\left. u\right\vert _{t=0} \geq 0$.
\end{enumerate}
Then $u \geq 0$ a.e. in $\left(0,1\right) \times \left(0,T\right)$.
\end{theorem}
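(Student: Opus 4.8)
The plan is to adapt the standard energy/Stampacchia truncation argument for parabolic maximum principles to the degenerate weighted setting, using the Gårding inequality of Lemma \ref{Garding} in place of classical coercivity. First I would introduce the negative part $u^- := \max(-u,0)$ and observe that, since $u(t) \in H_w^1(\Omega)$ and $\partial u/\partial t \in L^2(\Omega \times (0,T))$, the truncation $u^-$ also lies in $H_w^1(\Omega)$ for a.e.\ $t$, with $(u^-)' = -u' \,\chi_{\{u<0\}}$ in $L_w^2(\Omega)$; moreover $u^- \geq 0$. The test function $v = -u^-(t)$ is not compactly supported, so the inequality in hypothesis (2) must first be extended from $C_0^\infty(0,1)$, $v \geq 0$, to all nonnegative $v \in H_w^1(\Omega)$: this follows by density of $C_0^\infty(0,1)$ in $H_w^1(\Omega)$ together with the continuity estimates in Lemma \ref{Garding}(1) and the $L^2$-boundedness of $\partial u/\partial t$, noting that the truncation of a nonnegative $H_w^1$ approximant can be arranged to stay nonnegative.

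Next I would test the (extended) variational inequality with $v = u^-(t) \geq 0$ and use the sign relations $u\,u^- = -(u^-)^2$, $u'\,(u^-)' = -((u^-)')^2$ on $\{u<0\}$ and $u = u'= 0$ a.e.\ on $\{u \ge 0\}$ (in the weak-derivative sense). This yields, for a.e.\ $t$,
\begin{equation*}
-\frac{1}{2}\frac{d}{dt}\Vert u^-(t)\Vert_{L^2(\Omega)}^2 - A\big(u^-(t),u^-(t);t\big) \;\geq\; 0,
\end{equation*}
where one must check carefully that $\big(\partial u/\partial t, u^-\big) = -\tfrac12 \tfrac{d}{dt}\Vert u^-\Vert_{L^2}^2$; this is the Lipschitz-chain-rule step and uses hypothesis (1) to justify differentiating under the integral in the a.e.\ sense. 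Rearranging and applying the Gårding inequality $A(u^-,u^-;t) \geq \alpha \Vert u^-\Vert_{H_w^1}^2 - \gamma \Vert u^-\Vert_{L^2}^2$ gives
\begin{equation*}
\frac{1}{2}\frac{d}{dt}\Vert u^-(t)\Vert_{L^2(\Omega)}^2 \;\leq\; -\alpha \Vert u^-(t)\Vert_{H_w^1(\Omega)}^2 + \gamma \Vert u^-(t)\Vert_{L^2(\Omega)}^2 \;\leq\; \gamma \Vert u^-(t)\Vert_{L^2(\Omega)}^2 .
\end{equation*}
Since hypothesis (3) gives $u^-(0) = 0$, Grönwall's inequality forces $\Vert u^-(t)\Vert_{L^2(\Omega)}^2 \leq e^{2\gamma t}\Vert u^-(0)\Vert_{L^2(\Omega)}^2 = 0$ for all $t \in [0,T]$, hence $u^- \equiv 0$, i.e.\ $u \geq 0$ a.e.\ in $\Omega \times (0,T)$.

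The main obstacle I anticipate is not the Grönwall step — which is routine once the energy inequality is in place — but the two justifications forced by the degeneracy: (i) showing that $u^-(t)$ is an admissible test function, i.e.\ that truncation preserves membership in $H_w^1(\Omega)$ and that hypothesis (2) extends from $C_0^\infty$ to nonnegative $H_w^1$ functions despite the weight $x^2(1-x)^2$ vanishing at the endpoints (here one leans on the fact, noted in the excerpt, that $H^1(\Omega)$ is only a proper subspace of $H_w^1(\Omega)$, so density arguments must be carried out in the weighted space); and (ii) the chain rule $\big(\partial_t u, u^-\big) = -\tfrac12 \tfrac{d}{dt}\Vert u^-\Vert_{L^2}^2$ for a function that is merely in $L^2(0,T;H_w^1) \cap H^1(0,T;L^2)$ rather than the cleaner class $L^2(0,T;V)\cap H^1(0,T;V^*)$, which one handles by a Steklov/mollification-in-time regularization. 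Everything else — the sign bookkeeping in $A(u^-,u^-;t)$, which only needs the algebraic identities above and not the explicit form of $a,b,c$ — is mechanical.
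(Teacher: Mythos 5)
The paper itself contains no proof of this theorem: its ``proof'' is a one-line citation of \cite{GV}, so there is nothing in the text to compare against line by line. Your truncation argument (test with the negative part, use the G\"arding inequality, conclude by Gr\"onwall) is the standard proof of such weak maximum principles and is sound in outline; it is almost certainly the route taken in the cited reference, and you correctly identify the two genuinely technical points, namely the admissibility of $u^{-}(t)$ as a test function and the chain rule $(\partial_t u,u^{-})=-\tfrac12\tfrac{d}{dt}\Vert u^{-}\Vert_{L^2}^2$. A few repairs are still needed. First, the momentary choice $v=-u^{-}(t)$ is inadmissible (it is nonpositive); you correct this by testing with $v=u^{-}\ge 0$, but the slip should be removed, and the line ``$u=u'=0$ a.e.\ on $\{u\ge 0\}$'' should read $u^{-}=(u^{-})'=0$ there. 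Second, Lemma \ref{Garding} as printed states the G\"arding inequality for $\vert A(v,v,t)\vert$, which does not literally give the one-sided bound $A(u^{-},u^{-};t)\ge \alpha\Vert u^{-}\Vert_{H^1_w}^2-\gamma\Vert u^{-}\Vert_{L^2}^2$ that your energy step requires; you should state explicitly that you use the inequality without the absolute value, which is what \cite{GV} provides. Third, the density of nonnegative $C_0^{\infty}(0,1)$ functions in the nonnegative cone of $H^1_w(\Omega)$ is not automatic from generalities; it holds here precisely because the weight $x^2(1-x)^2$ degenerates quadratically at both endpoints, so a logarithmic cutoff near $x=0$ and $x=1$ has negligible weighted energy, and this deserves a sentence of justification rather than a bare appeal to density. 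With these points made explicit your argument is complete and matches the expected proof.
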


\begin{proof} The proof is given in \cite{GV}.
\end{proof}

\section{Space discretization}

In this section we describe the finite volume approximation of \eqref{DivEq}.

Let the interval $\Omega = [0,1]$ be subdivided into $N$ intervals $I_{i} := (x_{i},x_{i+1}), \; i=0,\dots,N-1,$ with $0 =: x_{0} < x_{1} < \dots < x_{N}:=1$. For each $i=0,\dots,N-1$ we set $h_{i} := x_{i+1} - x_{i}$ and $h := \max_{i=0,\dots,N-1} h_{i}$. We also denote
$x_{i+1/2} := x_{i}+h_{i}/2$ for $i=0,\dots,N-1$, $x_{-1/2}:=x_{0}=0$, $x_{N+1/2}:=x_{N}=1$ and $\Omega_{i}:= [x_{i-1/2},x_{i+1/2}]$ for $ i=0,\dots,N$.
Finally, we define $l_{i} := x_{i+1/2}-x_{i-1/2}$ for $i=0,\dots,N$.

Integrating \eqref{DivEq} over the control volumes $\Omega_{i}$ we obtain $N+1$ balance equations
\begin{equation*}
\int_{\Omega_{i}}{\dot u}dx - \left[x(1-x) \rho (u)\right]_{x_{i-1/2}}^{x_{i+1/2}} + \int_{\Omega_{i}} cu dx =0, \; \; i=0,\dots,N.
\end{equation*}

Multiplying the i-th equation with an arbitrary number $v_{i}$ and summing up the results we get

\begin{equation}\label{FVEq}
\sum_{i=0}^{N} \int_{\Omega_{i}}{\dot u} v_{i} dx - \sum_{i=0}^{N}\left[x(1-x) \rho(u) \right]_{x_{i-1/2}}^{x_{i+1/2}}v_{i} + \sum_{i=0}^{N}
\int_{ \Omega_{i}} cu v_{i} dx = 0.
\end{equation}

For an arbitrary function $v \in C({\overline {\Omega}})$ we define the mass-lumping operator $L : C( {\overline {\Omega}}) \to L_{\infty}(\Omega)$ by
$L_{h} v \vert_{ \Omega_{i} } := v (x_{i}), \; i=0,\dots,N$.

Therefore, using the operator $L_{h}$, equation \eqref{FVEq} can be written as follows:

\begin{eqnarray*}
& \left( {\dot u}(t), L_{h} v \right) + A_h(u,v;t) = 0, \; \forall v \in C({\overline{\Omega}}), \\
& A_h (w,v;t) := - \sum_{i=0}^{N} \left[x(1-x) \rho (w,x,t) \right]_{x_{i-1/2}}^{x_{i+1/2}} \left.L_{h}v\right|_{\Omega_i} + (c(t)w,L_{h}v).
\end{eqnarray*}

The spatial discretization starts from this equation. Applying the mid-point quadrature rule to all terms except the second one we obtain for all $v \in C({\overline{\Omega}})$

\begin{eqnarray*}
& ({\dot u}(t),v)_{h} - \sum_{i=0}^{N}\left[x(1-x) \rho (u(t),x,t)\right]_{x_{i-1/2}}^{x_{i+1/2}} \left.L_{h}v\right|_{\Omega_i} + (c(t) u,v)_{h} = 0, \\
& (w,v)_{h} := (L_{h}w,L_{h}v) = \sum_{i=0}^{N} w_{i} v_{i} l_{i}, \; w,v \in C({\overline{\Omega}}).
\end{eqnarray*}

Clearly, we now need to derive approximations of the continuous weighted flux density $x(1-x)\rho(u(t),x,t)$, defined above, at the midpoints $x_{i+1/2}$ of the intervals $I_i$ for $i=0,\dots,N-1$.

\textbf{Case 1} Approximation of $\rho$ at $x_{i+1/2}$ for $1 \leq i \leq N-2$.

Let us consider the following two-point boundary value problem for $x \in I_{i}$
\begin{equation*}
(a_{i+1/2}x(1-x)v' + b_{i+1/2}v)' =0; \; \; v(x_{i}) = u_{i}, v(x_{i+1}) = u_{i+1},
\end{equation*}
where $a_{i+1/2} = a(x_{i+1/2}), \; \; b_{i+1/2} = b(x_{i+1/2},t)$.

Following considerations, similar to those in \cite{CV,RV}, we obtain

\begin{equation}\label{FluxMid}
\rho_{i}(u) = b_{i+1/2} \frac{\left( \frac{x_{i+1}}{1-x_{i+1}} \right)^{\alpha_{i}} u_{i+1} - \left( \frac{x_{i}}{1-x_{i}} \right)^{\alpha_{i}} u_{i}}
{\left(\frac{x_{i+1}}{1-x_{i+1}}\right)^{\alpha_{i}} - \left(\frac{x_{i}}{1-x_{i}}\right)^{\alpha_{i}}}, \; \;i=1,\dots,N-2,
\end{equation}
where $\alpha_{i} = \frac{b_{i+1/2}}{a_{i+1/2}}$ and $\rho_{i}(u)$ provides an approximation to $\rho(u)$ at $x_{i+\frac{1}{2}}$.

\textbf{Case 2} Approximation of $\rho$ at $x_{1/2}$.

Now we write the flux in the form
\begin{equation*}
\rho (u) := {\overline{a}} x \frac{\partial u}{\partial x} + bu, \; \; \overline{a}=\overline{a}(x)=\frac{\sigma^{2}}{2}(1-x).
\end{equation*}

Note that the analysis in Case 1 can not be applied here because the flux degenerates at $x=0$. To solve this difficulty, following \cite{A,CV,RV,SW}, we
will reconsider the flux ODE with an extra degree of freedom in the following form

\begin{equation*}
({\overline{a}}_{1/2} x v' + b_{1/2} v)' = C \; \; \mbox{in} \; \; (0, x_{1}), \; \; v(0)=u_{0} , \; \; v(x_{1}) =u_{1},
\end{equation*}
where $C$ is an unknown constant to be determined. We obtain

\begin{equation*}
v(x)=\left\{
       \begin{array}{ll}
         x \frac{u_{1}-u_{0}}{h_{0}}+u_{0}, & \alpha_{0} \geq 0, \\
         \left(\frac{x}{h_{0}}\right)^{-(1+\alpha_{0})}x \frac{u_{1}-u_{0}}{h_{0}}+u_{0}, & \alpha_{0} < 0,
       \end{array}
     \right.
\end{equation*}

\begin{equation}\label{FluxLeft}
\rho_{0} (v):= \left\{
       \begin{array}{ll}
         \overline{a}_{1/2} (1+\alpha_{0}) x \frac{u_{1}-u_{0}}{h_{0}}+b_{1/2}u_{0}, & \alpha_{0} \geq 0, \\
         b_{1/2}u_{0}, & \alpha_{0} < 0.
       \end{array}
     \right.
\end{equation}

\textbf{Case 3} Approximation of $\rho$ at $x_{N-1/2}$.

We write the flux in the form

\begin{equation*}
\rho (u) := \stackrel{=}{a}_{N-1/2} (1-x) \frac{\partial u}{\partial x} + b_{N-1/2}u, \; \stackrel{=}{a}(x) = \frac{\sigma^{2}}{2}x.
\end{equation*}
The situation is symmetric to Case 2. We can not apply the arguments in Case 1 to the approximation of the weighted flux density on $I_{N-1} = (x_{N-1},x_{N})$ because equation \eqref{TransfEq} degenerates at $x=x_{N}=1$. However the considerations, given in Case 2, should be modified in order to formulate an appropriate two-point BVP. Again, we consider the flux ODE with an extra degree of freedom in the following form (recall $\alpha_{N-1}=b_{N-1/2}/{\stackrel{=}{a}}_{N-1/2}$)

\begin{equation}\label{FluxODERight1}
((1-x)v' + \alpha_{N-1}v)' = C_{0}, \; x \in I_{N-1},
\end{equation}
\begin{equation}\label{FluxODERightBC}
v(x_{N-1}) = u_{N-1}, \; v(x_{N}) = u_{N},
\end{equation}
where $C_{0}$ is an unknown constant to be determined. Integration of \eqref{FluxODERight1} yields the first-order linear equation

\begin{equation}\label{FluxODERight2}
(1-x)v' + \alpha_{N-1}v = C_{0}x + C_{1}, \; x \in I_{N-1},
\end{equation}
where $C_{1} $ denotes an additive constant. Afterwards we multiply \eqref{FluxODERight2} by $(1-x)^{-\alpha_{N-1}-1}$

\begin{equation}\label{FluxODERight3}
\left( (1-x)^{-\alpha_{N-1}}v \right)^{'}=C_{0}(1-x)^{-\alpha_{N-1}-1}x + C_{1}(1-x)^{-\alpha_{N-1}-1}.
\end{equation}

\textbf{Case 3.1} $ \alpha_{N-1} > 0, \alpha_{N-1} \neq 1 $.

Integrating \eqref{FluxODERight3} from $ x_{N-1} $ to $x \in I_{N-1} $ results in

\begin{eqnarray*}
& (1-x)^{-\alpha_{N-1}}v(x) - (1-x_{N-1})^{-\alpha_{N-1}}v(x_{N-1}) \\
& =C_{0}\left. \frac{(1-s)^{-\alpha_{N-1}+1}}{-\alpha_{N-1}+1} \right\vert_{x_{N-1}}^{x}-( C_{0} + C_{1} ) \left. \frac{(1-s)^{-\alpha_{N-1}}}{-\alpha_{N-1}}\right\vert_{x_{N-1}}^{x}.
\end{eqnarray*}

Multiplying both sides of the equation by $(1-x)^{\alpha_{N-1}}$

\begin{eqnarray*}
& v(x)=\frac{(1-x)^{\alpha_{N-1}}}{(1-x_{N-1})^{\alpha_{N-1}}}v(x_{N-1}) + C_{0} \frac{1-x}{-\alpha_{N-1}+1} - C_{0}
\frac{(1-x_{N-1})^{-\alpha_{N-1}+1}(1-x)^{\alpha_{N-1}}}{-\alpha_{N-1}+1} \\
& -(C_{0}+C_{1})\frac{1}{-\alpha_{N-1}} + (C_{0}+C_{1})\frac{(1-x_{N-1})^{-\alpha_{N-1}}(1-x)^{\alpha_{N-1}}}{-\alpha_{N-1}}.
\end{eqnarray*}

Letting $ x \to x_{N} = 1$ and making use of \eqref{FluxODERightBC} we arrive at $v(x_{N}) = u_{N} = \frac{ C_{0} + C_{1} }{\alpha_{N-1}}$ and finally

\begin{eqnarray}\label{FluxRight1}
\begin{split}
& v(x) = \frac{ (1-x)^{\alpha_{N-1} }}{(1-x_{N-1})^{\alpha_{N-1}}}(u_{N-1} - u_{N}) + u_{N} \\
& + \frac{\omega}{-\alpha_{N-1} + 1}(1-x)\left(1 - \frac{(1-x)^{\alpha_{N-1} - 1}}{( 1-x_{N-1} )^{\alpha_{N-1} - 1}}\right),
\end{split}
\end{eqnarray}
where $ \omega = C_{0} \in R $ is a free parameter. Therefore

\begin{equation*}
\rho_{N-1} (v) := { \stackrel{=}{a} }_{N-1/2}(x-1)\omega + b_{N-1/2} u_{N}.
\end{equation*}

\textbf{Case 3.2} $ \alpha_{N-1} = 1 $.

 Now we solve the following ODE

\begin{equation*}
\left( \frac{v} {1-x} \right)^{'} = C_{0} (1-x)^{-2}x + C_{1}(1-x)^{-2}.
\end{equation*}

Integrating over $ (x_{N-1},x), x \in I_{N-1} $, we obtain

\begin{eqnarray*}
& v(x) = \frac{1-x}{1-x_{N-1}}v(x_{N-1})+C_{0}(1-x)(\ln (1-x) - \ln (1-x_{N-1})) \\
& + (C_{0} + C_{1}) \left( \frac{1}{1-x} - \frac{1}{1 - x_{N-1}} \right)(1-x).
\end{eqnarray*}

Letting $x \to x_{N} = 1$ one gets

\begin{equation*}
v (x_{N} ) = u_{N} = C_{0} + C_{1} = \frac{ C_{0} + C_{1} }{\alpha_{N-1}},
\end{equation*}

\begin{equation*}
v(x) = \frac{1-x}{1 - x_{N-1}}(u_{N-1} - u_{N}) + u_{N} + \omega (1-x) \ln \frac{1-x}{1 - x_{N-1}}.
\end{equation*}

Since $ 1-x > 0 $ we can conclude that this is the result of the limiting process $ \alpha_{N-1} \to 1 $, performed on \eqref{FluxRight1}. The flux in both cases 3.1 and 3.2 can be written in the form

\begin{equation*}
\rho_{N-1} (v) = {\stackrel{=}{a}}_{N-1/2} (x-1) \omega + b_{N-1/2}u_{N}.
\end{equation*}

\textbf{Case 3.3} $ \alpha_{N-1} = 0 $.

Integrating over $ (x_{N-1},x), x \in I_{N-1} $, the following ODE

\begin{equation*}
v' = C_{0} (1-x)^{-1}x + C_{1}(1-x)^{-1}
\end{equation*}
we arrive at

\begin{equation*}
v(x) = v ( x_{N-1} ) - C_{0} (x - x_{N-1}) - (C_{0} + C_{1})(\ln (1-x) - \ln (1 - x_{N-1})).
\end{equation*}

The function $v(x)$ is bounded for $ x \to x_{N} $ when $C_{0} + C_{1} = 0$. Therefore $C_{0} = \frac{u_{N-1} - u_{N}}{1-x_{N-1}}$ and

\begin{equation*}
v(x) = u_{N} + (u_{N-1} - u_{N})\frac{1-x} {1-x_{N-1}}.
\end{equation*}

The flux has the following form

\begin{equation*}
\rho_{N-1} (v) = {\stackrel{=}{a}}_{N-1/2} (1-\alpha_{N-1})(1-x)\frac{ u_{N} - u_{N-1}}{h_{N-1}} + b_{N-1/2} u_{N},
\end{equation*}
where we used that $\alpha_{N-1} = b_{N-1/2} = 0$.

\textbf{Case 3.4} $ \alpha_{N-1} < 0 $.

This time we integrate \eqref{FluxODERight3} from $x$ to $ x_{N}=1$ and obtain

\begin{equation*}
v(x) = C_{0} \frac{1-x}{1-\alpha_{N-1}} + (C_{0} + C_{1})\frac{1}{\alpha_{N-1}}
\end{equation*}
and using the boundary conditions

\begin{equation*}
C_{0} = \frac{u_{N-1} - u_{N}}{1-x_{N-1}}(1 - \alpha_{N-1}), \; C_{1} = \alpha_{N-1} u_{N} - \frac{u_{N-1} - u_{N}}{1-x_{N-1}}(1-\alpha_{N-1}).
\end{equation*}

Therefore

\begin{equation*}
v(x) = \frac{1-x}{1 - x_{N+1}}( u_{N-1} - u_{N} ) + u_{N},
\end{equation*}

\begin{equation*}
\rho_{N-1} (v) = {\stackrel{=}{a}}_{N-1/2} (1-\alpha_{N-1})(1-x) \frac{ u_{N} - u_{N-1}}{h_{N-1}} + b_{N-1/2} u_{N}
\end{equation*}
and these are exactly the same results as in Case 3.3. Finally, a reasonable choice of the free parameter $\omega$ is 0 and

\begin{equation*}
v(x) = \left\{
    \begin{array}{ll}
        u_{N} + \frac{1-x}{1-x_{N-1}}(u_{N-1} - u_{N}), &  \alpha_{N-1} \leq 0, \\
        u_{N} + \frac{(1-x)^{\alpha_{N-1}}}{({1 - x_{N-1}})^{\alpha_{N-1}}}( u_{N-1} - u_{N} ), & \alpha_{N-1}>0,
    \end{array}
    \right.
\end{equation*}

\begin{equation}\label{FluxRight}
\rho_{N-1} (v) = \left\{
    \begin{array}{ll}
        {\stackrel{=}{a}}_{N-1/2}(1-\alpha_{N-1})(1-x)\frac{u_{N} - u_{N-1}}{h_{N-1}} + b_{N-1/2}u_{N}, & \alpha_{N-1} \leq 0, \\
        b_{N-1/2}u_{N}, & \alpha_{N-1} > 0.
    \end{array}
    \right.
\end{equation}

Let us introduce the finite element space $V_{h}$ by specifying it's basis $\{\phi_{i}\}_{i=0}^{N}$. Following \cite{CV,RV} we introduce the functions

\begin{eqnarray*}
\phi_{i}(x) = \left\{
     \begin{array}{ll}
      \frac{\left( \frac{1} {x_{i-1}} - 1 \right)^{\alpha_{i-1}} - \left( \frac{1}{x} - 1 \right)^{\alpha_{i-1}}}
      {\left( \frac{1} {x_{i-1}} - 1 \right)^{\alpha_{i-1}} - \left( \frac{1}{x_{i} } - 1 \right)^{ \alpha_{i-1}}}, & x \in (x_{i-1},x_{i}), \\
      \frac{\left( \frac{1} {x_{i+1}} - 1 \right)^{\alpha_{i}} - \left( \frac{1}{x} - 1 \right)^{\alpha_{i}}}
      {\left( \frac{1} {x_{i+1}} - 1 \right)^{\alpha_{i}} - \left(\frac{1}{x_{i}} - 1 \right)^{\alpha_{i}}}, & x \in (x_{i},x_{i+1}).
     \end{array}
  \right.
\end{eqnarray*}

On the intervals $(0,x_{1})$ and $(x_{N-1},1)$ we define the linear functions

\begin{eqnarray*}
\phi_{0} (x) = \left\{
    \begin{array}{l}
     1 - \frac{x}{ x_{1}}, \; x \in (0, x_{1}) \\ 0, \; \mbox{otherwise};
    \end{array}
    \right. , \; \;
\phi_{N} (x) = \left\{
    \begin{array}{l}
     \frac{x-x_{N-1}}{1-x_{N-1}}, \; x \in (x_{N-1},1), \\ 0, \; \mbox{otherwise}.
    \end{array}
   \right.
\end{eqnarray*}

Next we define the linear functions $\phi_{1}(x)$ and $\phi_{N-1}(x)$ on the intervals $(0,x_{2})$ and $(x_{N-2},1)$
\begin{eqnarray*}
\phi_{1} (x) = \left\{
    \begin{array}{l}
     1 - \frac{x}{ x_{1}} , \; x \in (0, x_{1}), \\
     \frac{\left( \frac{1} {x_{2}} - 1 \right)^{\alpha_{1}} - \left( \frac{1}{x} - 1 \right)^{\alpha_{1}}}
     {\left(\frac{1} {x_{2}} - 1 \right)^{\alpha_{1}} - \left(\frac{1}{x_{1}} - 1 \right)^{\alpha_{1}}},
     \; \; x \in (x_{1},x_{2}); \\ 0, \; \mbox{otherwise};
    \end{array}
    \right.
\end{eqnarray*}
\begin{eqnarray*}
\phi_{N-1} (x) = \left\{
    \begin{array}{l}
     \frac{\left(\frac{1}{x_{N-2}} - 1 \right)^{\alpha_{N-2}} - \left(\frac{1}{x} - 1 \right)^{\alpha_{N-2}}}
     {\left(\frac{1} {x_{N-2}} - 1 \right)^{\alpha_{N-2}} - \left( \frac{1}{x_{N-1}} - 1 \right)^{\alpha_{N-2}}}, \; \; x \in (x_{N-2} ,x_{N-1}), \\
     (1-x)/(1-x_{N-1}) , \; x \in (x_{N-1} ,1) ; \\ 0, \; \mbox{otherwise}.
    \end{array}
    \right.
\end{eqnarray*}

Then, for any $v_{h} \in V_{h}$, we have the representation $v_{h} (x) = \sum_{i=0}^{N} v_{hi} \phi_{i}$, where $v_{hi}:= v_{h} ( x_{i})$. Associated with $v_{h}$, we introduce the natural interpolation operator $I_{h}: C({\overline{\Omega}}) \to V_{h}$ by

\begin{equation*}
I_{h} v_{h} (x_{i}) := v_{h} (x_{i}) = v_{hi}, \; \; i=0,\dots,N.
\end{equation*}

Furthermore, using the flux approximations \eqref{FluxMid},\eqref{FluxLeft},\eqref{FluxRight}, obtained in Cases 1, 2 and 3 respectively, we define by $\rho_{h}(u)$ an approximation to
$\rho (u) \vert_{I_{i}} := \rho_{i} (u), \; \; i=0,\dots,N-1$. Coming back to \eqref{VarForm}, this motivates the following semi-discretization of \eqref{DivEq} in the space $V_{h}$:
\begin{eqnarray*}
& ( {\dot u}_{h}(t), v_{h} )_{h} + A_{h}( u_{h}(t), v_{h}; t) = 0 \; \; \forall v_{h} \in V_{h}, \\
& A_{h} (w_{h},v_{h};t) := - \sum_{i=0}^{N} \left[ x(1-x) \rho_{h}(w_{h},x,t) \right]_{x_{i-1/2}}^{x_{i+1/2}} L_{h} v_{h}(x_{i}) + ( c(t)w_{h},v_{h} )_{h}.
\end{eqnarray*}

As usual, from \eqref{VarForm} an equivalent ODEs system is obtained by setting successfully $v_{h} = \phi_{i}, \; i=0,\dots,N$:
\begin{eqnarray*}
& {\dot u}_{hi}(t) l_{i} - x_{i+1/2}(1-x_{i+1/2})\rho_{h}(u_{h}(t),x_{i+1/2},t) \\
& + x_{i-1/2}(1-x_{i-1/2})\rho_{h}(u_{h}(t),x_{i-1/2},t) + c_{i}(t) u_{hi}(t) l_{i} = 0, \; \; c_{i}(t) := c(x_{i},t).
\end{eqnarray*}

The complete set of equations forms an $(N+1) \times (N+1)$ system of linear ODEs w.r.t. $u_{h} (t) := (u_{h0}(t),\dots,u_{hN}(t))^{T}$:
\begin{equation*}
M_{h}{\dot u}_{h}(t) + A_{h} (t) u_{h} (t)=0,
\end{equation*}
where
\begin{equation*}
M_{h} := \left((\phi_{j},\phi_{i})_{h}\right)_{i,j=0}^{N} = diag (l_{0},\dots,l_{N}),
\end{equation*}
\begin{equation*}
A_{h} (t) := \left(A_{h}(\phi_{j},\phi_{i};t)\right)_{i,j=0}^{N} = \left(a_{ij}(t)\right)_{i,j=0}^{N}.
\end{equation*}

\section{Full discretization}

Let $0 =: t_{0} < t_{1} < \dots < t_{N_t} := T$ be a subdivision of the time interval $[0,T]$ with the step sizes $\triangle t_{m} := t_{m+1} - t_{m} > 0, \; m \in \{0,\dots,N_t-1\}$. The fully discrete method with parameter $\theta \in [0,1]$ for \eqref{DivEq} reads as follows:

\emph{Find a sequence $U^{1},\dots,U^{N_t} \in V_{h}$ such that for $m \in \{0,\dots,N_t-1\}$}
\begin{eqnarray*}
& \left(\frac{U^{m+1} - U^{m}}{\triangle t_{m}},v_{h}\right) + A_{h}(\theta U^{m+1} + (1-\theta)U^{m},v_{h};t_{m+\theta})=0\;\forall v_{h} \in V_{h}, \\
& U^{0} = u_{0h},
\end{eqnarray*}
\emph{where $t_{m+\theta} := \theta t_{m+1} + (1 - \theta)t_{m} = t_{m} + \theta \triangle t_{m}$ and $ u_{0h} \in V_{h} $ is an approximation to $u_{0}$}. By representing the elements $U^{m} $ in terms of the basis $ \{ \phi_{i} \}_{i=0}^{N-1} $ of $ V_{h} $ and choosing $v_{h} = \phi_{j}, \; j=0,\dots,N$ we get the algebraic form

\begin{equation}\label{FullDForm}
\frac{\textbf{M}_{h} \textbf{u}^{m+1}_{h} - \textbf{M}_{h} \textbf{u}^{m}_{h}}{ \triangle t_{m} } + \theta \textbf{A}^{m+\theta}_{h} \textbf{u}_{h}^{m+1} + (1 - \theta) \textbf{A}^{m+\theta}_{h} \textbf{u}_{h}^{m}=0, \;
\textbf{A}_{h}^{m}:= \textbf{A}_{h}(t_{m}).
\end{equation}

The initial condition $\textbf{u}_{h}^{0}$ is obtained from the representation of $u_{0h}$ by means of the basis of $V_{h}$.

We will show, Theorem \ref{Mmatrix}, that the system matrix $\textbf{E}_h = \{e_{i,j}\}_{i,j=0}^{N,N_t} = \textbf{M}_h/\triangle t_{m}+\theta \textbf{A}^{m+\theta}_{h}$ can be reduced to an $M$-matrix by excluding the first two and the last two equations in \eqref{FullDForm}. Therefore, the above problem \eqref{FullDForm} is uniquely solvable and our method \emph{preserves the positivity}, Theorem \ref{WMP} (maximum principle), of the numerical solution in time. Let us introduce the notations
\begin{eqnarray*}
\varphi_i^{\alpha_i} && :=\left( \frac{x_i}{1-x_i} \right)^{\alpha_i}, \; \Delta_{i}^{\alpha_i} := \frac{1}{\varphi_{i+1}^{\alpha_i}-\varphi_{i}^{\alpha_i}}, \\
a_{i \pm 1/2} && := a(x_{i \pm 1/2}), \; b_{i \pm 1/2}^{m+\theta} := b(x_{i \pm 1/2},t^{m+\theta}), \; c_{i \pm 1/2}^{m+\theta} := c(x_{i \pm 1/2},t^{m+\theta})
\end{eqnarray*}
and write down the elements of the system matrix:

for $i=0$ if $\alpha_0 < 0$ then
\begin{eqnarray*}
e_{0,0} && = \frac{l_{0}}{\Delta t_m} + \theta x_{1/2}(1-x_{1/2})(-b_{1/2}^{m+\theta}) + \theta l_{0}c_{0}^{m+\theta}, \; e_{0,1} = 0 \\
F_{0} &&  = u_{h0}^{m}\left(\frac{l_{0}}{\Delta t_m} - (1-\theta)\left(x_{1/2}(1-x_{1/2})(-b_{1/2}^{m+\theta}) + l_{0}c_{0}^{m+\theta}\right)\right),
\end{eqnarray*}
and if $\alpha_0 \geq 0$ then
\begin{eqnarray*}
e_{0,0} && = \frac{l_{0}}{\Delta t_m} + \theta x_{1/2}(1-x_{1/2})0.5(\overline{a}_{1/2}-b_{1/2}^{m+\theta}) + \theta l_{0}c_0^{m+\theta}, \\
e_{0,1} && = - \theta x_{1/2}(1-x_{1/2})0.5(\overline{a}_{1/2}+b_{1/2}^{m+\theta}), \\
F_{0} && = u_{h0}^{m}\left( \frac{l_{0}}{\Delta t_m} - (1-\theta)\left( x_{1/2}(1-x_{1/2})0.5(\overline{a}_{1/2}-b_{1/2}^{m+\theta}) \right. \right. \\
&& + \left. \left. l_{0}c_0^{m+\theta} \right) \right) + u_{h1}^{m}(1-\theta)\left( x_{1/2}(1-x_{1/2})0.5(\overline{a}_{1/2}+b_{1/2}^{m+\theta}) \right);
\end{eqnarray*}
for $i=1$ if $\alpha_0 < 0$ then
\begin{eqnarray*}
e_{1,1} && = \frac{l_{1}}{\Delta t_m} + \theta x_{i+1/2}(1-x_{i+1/2})b_{i+1/2}^{m+\theta}\phi_{i}^{\alpha_i}\Delta_{i}^{\alpha_i} + \theta l_{1}c_1^{m+\theta}, \\
e_{1,0} && = - \theta x_{1/2}(1-x_{1/2})(-b_{1/2}^{m+\theta}), \\
e_{1,2} && = - \theta x_{i+1/2}(1-x_{i+1/2}) b_{i+1/2}^{m+\theta}\phi_{i+1}^{\alpha_i}\Delta_{i}^{\alpha_i}, \\
F_{1} && = u_{h1}^{m}\left( \frac{l_{1}}{\Delta t_m} - (1-\theta)\left( x_{i+1/2}(1-x_{i+1/2})b_{i+1/2}^{m+\theta}\phi_{i}^{\alpha_i}\Delta_{i}^{\alpha_i} \right. \right. \\ && \left. \left. +  l_{1}c_1^{m+\theta} \right) \right) + u_{h0}^{m} (1-\theta) x_{1/2}(1-x_{1/2})(-b_{i+1/2}^{m+\theta}) \\
&& + u_{h2}^{m} (1-\theta) x_{i+1/2}(1-x_{i+1/2}) b_{i+1/2}^{m+\theta}\phi_{i+1}^{\alpha_i}\Delta_{i}^{\alpha_i}
\end{eqnarray*}
and if $\alpha_0 \geq 0$ then
\begin{eqnarray*}
e_{1,1} && = \frac{l_{1}}{\Delta t_m} + \theta x_{i+1/2}(1-x_{i+1/2})b_{i+1/2}^{m+\theta}\phi_{i}^{\alpha_i}\Delta_{i}^{\alpha_i} \\
&& + \theta x_{1/2}(1-x_{1/2})0.5(\overline{a}_{1/2}+b_{i+1/2}^{m+\theta}) + \theta l_{1}c_1^{m+\theta}, \\
e_{1,0} && = - \theta x_{1/2}(1-x_{1/2})0.5(\overline{a}_{1/2}-b_{i+1/2}^{m+\theta}), \\
e_{1,2} && = - \theta x_{i+1/2}(1-x_{i+1/2}) b_{i+1/2}^{m+\theta} \phi_{i+1}^{\alpha_i}\Delta_{i}^{\alpha_i}, \\
F_{1} && = u_{h1}^{m}\left( \frac{l_{1}}{\Delta t_m} - (1-\theta)\left( x_{i+1/2}(1-x_{i+1/2})b_{i+1/2}^{m+\theta}\phi_{i}^{\alpha_i}\Delta_{i}^{\alpha_i} \right. \right. \\
&& \left. \left. + x_{1/2}(1-x_{1/2})0.5(\overline{a}_{1/2}+b_{i+1/2}^{m+\theta}) + l_{1}c_1^{m+\theta} \right) \right) \\
&& + u_{h0}^{m} (1-\theta) x_{1/2}(1-x_{1/2})0.5(\overline{a}_{1/2}-b_{i+1/2}^{m+\theta}) \\
&& + u_{h2}^{m} (1-\theta) x_{i+1/2}(1-x_{i+1/2}) b_{i+1/2}^{m+\theta} \phi_{i+1}^{\alpha_i}\Delta_{i}^{\alpha_i};
\end{eqnarray*}
for $i=2,\dots,N-2$
\begin{eqnarray*}
e_{i,i} && = \frac{l_{i}}{\Delta t_m} + \theta x_{i+1/2}(1-x_{i+1/2})b_{i+1/2}^{m+\theta}\phi_{i}^{\alpha_i}\Delta_{i}^{\alpha_i} \\
&& + \theta x_{i-1/2}(1-x_{i-1/2})b_{i-1/2}^{m+\theta}\phi_{i}^{\alpha_{i-1}}\Delta_{i-1}^{\alpha_{i-1}} + \theta l_{i} c_i^{m+\theta}, \\
e_{i,i-1} && = - \theta x_{i-1/2}(1-x_{i-1/2})b_{i-1/2}^{m+\theta}\phi_{i-1}^{\alpha_{i-1}}\Delta_{i-1}^{\alpha_{i-1}},
\end{eqnarray*}
\begin{eqnarray*}
e_{i,i+1} && = - \theta x_{i+1/2}(1-x_{i+1/2})b_{i+1/2}^{m+\theta}\phi_{i+1}^{\alpha_i}\Delta_{i}^{\alpha_i},  \\
F_{i} && = u_{hi}^{m}\left( \frac{l_{i}}{\Delta t_m} - (1-\theta)\left( x_{i+1/2}(1-x_{i+1/2})b_{i+1/2}^{m+\theta}\phi_{i}^{\alpha_i}\Delta_{i}^{\alpha_i} \right. \right. \\
&& \left. \left. + x_{i-1/2}(1-x_{i-1/2})b_{i-1/2}^{m+\theta}\phi_{i}^{\alpha_{i-1}}\Delta_{i-1}^{\alpha_{i-1}} + l_i c_i^{m+\theta} \right) \right) \\
&& + u_{hi-1}^{m} (1-\theta)x_{i-1/2}(1-x_{i-1/2})b_{i-1/2}^{m+\theta}\phi_{i-1}^{\alpha_{i-1}}\Delta_{i-1}^{\alpha_{i-1}} \\
&& + u_{hi+1}^{m} (1-\theta)x_{i+1/2}(1-x_{i+1/2})b_{i+1/2}^{m+\theta}\phi_{i+1}^{\alpha_i}\Delta_{i}^{\alpha_i};
\end{eqnarray*}
for $i=N-1$ if $\alpha_{N-1}>0$
\begin{eqnarray*}
e_{N-1,N-1} && = \frac{l_{N-1}}{\Delta t_m} + \theta x_{i-1/2}(1-x_{i-1/2})b_{i-1/2}^{m+\theta}\phi_{i}^{\alpha_{i-1}}\Delta_{i-1}^{\alpha_{i-1}} + \theta l_{N-1}c_{N-1}^{m+\theta}, \\
e_{N-1,N-2} && = - \theta x_{N-1/2}(1-x_{N-1/2})b_{N-1/2}^{m+\theta}, \\
e_{N-1,N} && = - \theta x_{i-1/2}(1-x_{i-1/2})b_{i-1/2}^{m+\theta} \phi_{i-1}^{\alpha_{i-1}}\Delta_{i-1}^{\alpha_{i-1}}, \\
F_{N-1} && = u_{hN-1}^{m}\left( \frac{l_{1}}{\Delta t_m} - (1-\theta) \left( x_{i-1/2}(1-x_{i-1/2})b_{i-1/2}^{m+\theta}\phi_{i}^{\alpha_{i-1}}\Delta_{i-1}^{\alpha_{i-1}} \right. \right. \\
&& \left. \left. + x_{N-1/2}(1-x_{N-1/2})b_{N-1/2}^{m+\theta} + l_{N-1}c_{N-1}^{m+\theta} \right) \right) \\
&& + u_{hN-2}^{m}(1-\theta)x_{N-1/2}(1-x_{N-1/2})b_{N-1/2}^{m+\theta} \\
&& + u_{hN}^{m}(1-\theta)x_{i-1/2}(1-x_{i-1/2})b_{N-1/2}^{m+\theta}\phi_{i-1}^{\alpha_{i-1}}\Delta_{i-1}^{\alpha_{i-1}}
\end{eqnarray*}
and if $\alpha_{N-1} \leq 0$ then
\begin{eqnarray*}
e_{N-1,N-1} && = \frac{l_{N-1}}{\Delta t_m} + \theta x_{i-1/2}(1-x_{i-1/2})b_{i-1/2}^{m+\theta}\phi_{i}^{\alpha_{i-1}}\Delta_{i-1}^{\alpha_{i-1}} \\
&& + \theta x_{N-1/2}(1-x_{N-1/2})0.5({\stackrel{=}{a}}_{i-1/2}^{m+\theta}-b_{i-1/2}^{m+\theta}) + \theta l_{N-1}c_{N-1}^{m+\theta}, \\
e_{N-1,N} && = - \theta x_{N-1/2}(1-x_{N-1/2})0.5({\stackrel{=}{a}}_{i-1/2}^{m+\theta}+b_{i-1/2}^{m+\theta}), \\
e_{N-1,N-2} && = - \theta x_{i-1/2}(1-x_{i-1/2})b_{i-1/2}^{m+\theta}\phi_{i-1}^{\alpha_{i-1}}\Delta_{i-1}^{\alpha_{i-1}},  \\
F_{N-1} && = u_{hN-1}^{m}\left( \frac{l_{N-1}}{\Delta t_m} - (1-\theta) \left( x_{i-1/2}(1-x_{i-1/2})b_{i-1/2}^{m+\theta}\phi_{i}^{\alpha_{i-1}}\Delta_{i-1}^{\alpha_{i-1}} \right. \right. \\
&& \left. \left. + x_{N-1/2}(1-x_{N-1/2})0.5({\stackrel{=}{a}}_{i-1/2}^{m+\theta}-b_{i-1/2}^{m+\theta}) + l_{N-1}c_{N-1}^{m+\theta} \right) \right) \\
&& + u_{hN-2}^{m}(1-\theta)x_{i-1/2}(1-x_{i-1/2})b_{i-1/2}^{m+\theta}\phi_{i-1}^{\alpha_{i-1}}\Delta_{i-1}^{\alpha_{i-1}} \\
&& + u_{hN}^{m}(1-\theta)x_{N-1/2}(1-x_{N-1/2})0.5({\stackrel{=}{a}}_{i-1/2}^{m+\theta}+b_{N-1/2}^{m+\theta});
\end{eqnarray*}
for $i=N$ if $\alpha_{N-1} > 0$ then
\begin{eqnarray*}
e_{N,N} && = \frac{l_{N}}{\Delta t_m} + \theta x_{N-1/2}(1-x_{N-1/2})b_{N-1/2}^{m+\theta} + \theta l_{N}c_N^{m+\theta}, \;
e_{N,N-1} = 0, \\
F_{N} && = u_{hN}^{m}\left( \frac{l_{N}}{\Delta t_m} - (1-\theta) \left( x_{N-1/2}(1-x_{N-1/2})b_{N-1/2}^{m+\theta} + l_{N}c_N^{m+\theta} \right) \right)
\end{eqnarray*}
and if $\alpha_{N-1} \leq 0$ then
\begin{eqnarray*}
e_{N,N} && = \frac{l_{N}}{\Delta t_m} + \theta x_{N-1/2}(1-x_{N-1/2})0.5({\stackrel{=}{a}}_{N-1/2}^{m+\theta}+b_{N-1/2}^{m+\theta}) + \theta l_{N}c_N^{m+\theta}, \\
e_{N,N-1} && = - \theta x_{N-1/2}(1-x_{N-1/2})0.5({\stackrel{=}{a}}_{N-1/2}^{m+\theta}-b_{N-1/2}^{m+\theta}), \\
F_{N} && = u_{hN}^{m}\left( \frac{l_{N}}{\Delta t_m} - (1-\theta) \left( x_{N-1/2}(1-x_{N-1/2})0.5({\stackrel{=}{a}}_{N-1/2}^{m+\theta} \right. \right. \\
&& \left. \left. + b_{N-1/2}^{m+\theta}) + l_{N}c_N^{m+\theta} \right) \right) \\
&& + u_{hN-1}^{m}(1-\theta) x_{N-1/2}(1-x_{N-1/2})0.5({\stackrel{=}{a}}_{N-1/2}^{m+\theta}-b_{N-1/2}^{m+\theta}).
\end{eqnarray*}

\begin{theorem}\label{Mmatrix}
For any given $m=1,2,\dots,N_t$, if $\Delta t_m$ is sufficiently small, the system matrix of \eqref{FullDForm}, $\textbf{E}_h$, is an $M$-matrix.
\end{theorem}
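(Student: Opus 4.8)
The plan is to show that, for $\Delta t_m$ sufficiently small, $\textbf{E}_h$ is a $Z$-matrix (strictly positive diagonal entries, nonpositive off-diagonal entries) which is strictly diagonally dominant by rows; a classical result then forces $\textbf{E}_h$ to be nonsingular with $\textbf{E}_h^{-1}\ge 0$, i.e. an $M$-matrix, and this is exactly what gives unique solvability of \eqref{FullDForm} and, together with Theorem \ref{WMP}, the positivity preservation.

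I would first collect the elementary sign facts. Since $0<x_i<1$ for $1\le i\le N-1$, all the geometric factors $x_{i\pm 1/2}(1-x_{i\pm 1/2})$, $a_{i\pm 1/2}=\frac12\sigma^2(t)x_{i\pm1/2}(1-x_{i\pm1/2})$, $\overline a_{1/2}$, $\stackrel{=}{a}_{N-1/2}$ and $\varphi_i^{\alpha_i}=(x_i/(1-x_i))^{\alpha_i}$ are strictly positive. Because $x\mapsto x/(1-x)$ is strictly increasing on $(0,1)$, the difference $\varphi_{i+1}^{\alpha_i}-\varphi_i^{\alpha_i}$ has the same sign as $\alpha_i$; since $\alpha_i=b_{i+1/2}/a_{i+1/2}$ with $a_{i+1/2}>0$, this yields the crucial facts $b_{i+1/2}^{m+\theta}\Delta_i^{\alpha_i}>0$ and $b_{i-1/2}^{m+\theta}\Delta_{i-1}^{\alpha_{i-1}}>0$ (the case $\alpha_i=0$ being covered by passing to the limit, where the fitted flux degenerates into the linear-interpolation flux). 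Moreover, if $\theta=0$ then $\textbf{E}_h=\textbf{M}_h/\Delta t_m=\mathrm{diag}(l_0/\Delta t_m,\dots,l_N/\Delta t_m)$ is trivially an $M$-matrix, so one may assume $\theta>0$ from now on.

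Next I would treat the interior rows $i=2,\dots,N-2$. There $e_{i,i\pm1}=-\theta\,x_{i\pm1/2}(1-x_{i\pm1/2})\,b_{i\pm1/2}^{m+\theta}\,\varphi_{i\pm1}^{\alpha}\,\Delta^{\alpha}$ is $-\theta$ times a product of positive quantities, hence $\le 0$. The diagonal entry has the form $e_{i,i}=l_i/\Delta t_m+(\text{terms bounded independently of }\Delta t_m)$, so $e_{i,i}>0$ for $\Delta t_m$ small. For strict row dominance I would subtract $|e_{i,i-1}|+|e_{i,i+1}|$ from $e_{i,i}$ and use the telescoping identities $(\varphi_i^{\alpha}-\varphi_{i+1}^{\alpha})\Delta_i^{\alpha}=-1$ and $(\varphi_i^{\alpha}-\varphi_{i-1}^{\alpha})\Delta_{i-1}^{\alpha}=1$; the $\varphi$-dependent parts collapse and one is left with
\begin{align*}
e_{i,i}-|e_{i,i-1}|-|e_{i,i+1}| &= \frac{l_i}{\Delta t_m}+\theta l_i c_i^{m+\theta} \\
&\quad -\theta x_{i+1/2}(1-x_{i+1/2})b_{i+1/2}^{m+\theta}+\theta x_{i-1/2}(1-x_{i-1/2})b_{i-1/2}^{m+\theta},
\end{align*}
whose right-hand side is positive once $\Delta t_m$ is small, since every term but the first is bounded (uniformly in $m$, as $b$ and $c$ are continuous on $\overline{\Omega}\times[0,T]$ and $l_i$ is fixed).

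Finally, the four boundary rows $i\in\{0,1,N-1,N\}$ must be inspected case by case according to the sign of $\alpha_0$ (resp. $\alpha_{N-1}$), which coincides with the sign of $b_{1/2}^{m+\theta}$ (resp. $b_{N-1/2}^{m+\theta}$). For $\alpha_0<0$ one has $e_{0,1}=0$ and $e_{1,0}=\theta x_{1/2}(1-x_{1/2})b_{1/2}^{m+\theta}<0$; for $\alpha_0\ge 0$ the combinations $\overline a_{1/2}\pm b_{1/2}^{m+\theta}$ carry the signs that make $e_{0,1},e_{1,0}$ nonpositive, and the same telescoping/partial-cancellation argument yields the strict row sums. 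The rows $i=N-1,N$ are symmetric, via the representation $\rho=\stackrel{=}{a}(1-x)\partial_x u+bu$ near $x=1$ and the formulas of Case 3. Equivalently — and this is how the paragraph preceding the theorem is phrased — one may delete equations $i=0,1$ and $i=N-1,N$: the remaining block is exactly the interior matrix shown above to be an $M$-matrix, while the deleted rows have $e_{0,0},e_{1,1},e_{N-1,N-1},e_{N,N}>0$, so they determine the corresponding components uniquely once the interior ones are known. The step I expect to be the real obstacle is precisely this boundary bookkeeping: near the degenerate endpoints the fitted flux is assembled from a different (singular) two-point problem in each sign regime, so the nonpositivity of $e_{0,1},e_{1,0},e_{N-1,N},e_{N,N-1}$ and the cancellation producing strict dominance have to be checked by hand in every one of the listed cases — which is exactly why the first two and the last two equations are singled out.
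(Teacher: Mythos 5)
Your treatment of the interior rows is sound and matches the paper (the sign of $1-\overline{x}_i^{\alpha_i}$ agrees with that of $\alpha_i$, so $e_{i,i\pm1}\le 0$, and the telescoping identity gives strict row dominance for small $\Delta t_m$). The gap is exactly where you predicted it, but it is not mere ``bookkeeping'': your central claim fails there. For $\alpha_0\ge 0$ one has $e_{1,0}=-\theta x_{1/2}(1-x_{1/2})\,0.5\bigl(\overline{a}_{1/2}-b_{1/2}^{m+\theta}\bigr)$, and there is no reason why $\overline{a}_{1/2}\ge b_{1/2}^{m+\theta}$: near $x=0$, $\overline{a}_{1/2}\approx\sigma^2/2$ while $b_{1/2}\approx r-d-\sigma^2$, so for perfectly ordinary parameters (say $r-d$ a few times larger than $\sigma^2$) $e_{1,0}>0$. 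The symmetric entries $e_{N-1,N}$, $e_{N,N-1}$ can likewise take the wrong sign when $\alpha_{N-1}\le 0$ and $\bigl|b_{N-1/2}\bigr|>{\stackrel{=}{a}}_{N-1/2}$. Hence $\textbf{E}_h$ is in general \emph{not} a Z-matrix, and the global ``Z-matrix plus strict row dominance'' route cannot work; this is precisely why the paper never asserts sign conditions on the boundary rows.

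Your fallback (``delete equations $i=0,1,N-1,N$; the remaining block is exactly the interior matrix'') is also not correct as stated: rows $i=2$ and $i=N-2$ couple to $u_{h1}^{m+1}$ and $u_{hN-1}^{m+1}$, so the boundary unknowns must be \emph{eliminated}, not the equations deleted, and what remains is a Schur complement, not a submatrix of $\textbf{E}_h$. The paper's proof consists essentially of carrying out this elimination: solving the first two equations for $u_{h0}^{m+1},u_{h1}^{m+1}$ gives the modified corner entry $\widetilde{B}_2=B_2-A_2C_1/\triangle$ with $\triangle=B_1-A_1C_0/B_0$, and the load $\widetilde{F}_2=F_2-A_2\triangle_1/\triangle$; one then has to argue that $\triangle>0$ with $\triangle=O(1/\Delta t_m)$, so that $\widetilde{B}_2=O(1/\Delta t_m)>|C_2|$ and $\widetilde{F}_2\ge 0$ for $\Delta t_m$ small (and symmetrically at $i=N-2$), after which the reduced tridiagonal system for $u_{h2}^{m+1},\dots,u_{hN-2}^{m+1}$ is an M-matrix with nonnegative right-hand side, and the four eliminated components are recovered and checked nonnegative from their explicit formulas. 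This order-of-magnitude argument on the Schur complement is the actual content of the proof and is missing from your proposal.
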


\begin{proof}
Let us write down the scalar form of \eqref{FullDForm}:

\begin{eqnarray*}
& B_{0} u_{h0}^{m+1} + C_{0}u_{h1}^{m+1} = F_{0} \\
& A_{1} u_{h0}^{m+1} + B_{1} u_{h1}^{m+1} + C_{1} u_{h2}^{m+1} = F_{1} \\
& A_{2} u_{h1}^{m+1} + B_{2} u_{h2}^{m+1} + C_{2} u_{h3}^{m+1} = F_{2} \\
& ............................... \\
& A_{i} u_{hi-1}^{m+1} + B_{i} u_{hi}^{m+1} + C_{i} u_{hi+1}^{m+1} = F_{i}, \\
& ............................... \\
& A_{N} u_{hN-1}^{m+1} + B_{N} u_{hN}^{m+1} = F_{N},
\end{eqnarray*}
for $i=3,4,\dots,N-1$ and
\begin{eqnarray*}
& B_{0} = \frac{h_{0}}{2 \Delta t_m} + \theta e_{0,0}, \; C_{0} = - \theta e_{0,1}, \\
& A_{1} = - \theta e_{1,0}, \; B_{1} = \frac{l_{1}}{\Delta t_m} + \theta e_{1,1}, \; C_{1} = - \theta e_{1,2}, \\
& A_{i} = - \theta e_{i,i-1}, \; B_{i} = \frac{l_{i}}{\Delta t_m} + \theta e_{i,i}, \; C_{i} = - \theta e_{i,i+1}, \; i=2,3,\ldots,N-1, \\
& A_{N} = - \theta e_{N,N-1}, \; B_{N} = \frac{h_{N-1}}{2 \Delta t_m} + \theta e_{N,N}, \\
& F_{0} = \left( \frac{h_{0}}{2 \Delta t_m} - (1 - \theta) e_{0,0} \right) u_{h0}^{m} + (1 - \theta) e_{0,1} u_{h1}^{m}, \\
& F_{1} = (1 - \theta) e_{1,0} u_{h0}^{m} + \left( \frac{l_{1}}{\Delta t_m} - (1 - \theta) e_{1,1} \right) u_{h1}^{m} + (1 - \theta) e_{1,2} u_{h2}^{m}, \\
& F_{i} = (1 - \theta) e_{i,i-1} u_{hi-1}^{m} + \left( \frac{l_{i}}{\Delta t_m} - (1 - \theta) e_{i,i} \right) u_{hi}^{m} + (1 - \theta) e_{i,i+1} u_{hi+1}^{m}, \\
& F_{N} = (1 - \theta) e_{N,N-1} u_{hN-1}^{m} + \left( \frac{h_{N-1}}{2 \Delta t_m} - (1 - \theta) e_{N,N} \right) u_{hN}^{m}.
\end{eqnarray*}

Let us first investigate the off-diagonal entries of the system matrix $A_{i} = - \theta e_{i,i-1}$ and $C_{i} = - \theta e_{i,i+1}$. From the formulas for $ e_{i,j} $ from the above we have  $e_{i,j}> 0$, $i,j = 1,2,\dots,N-1, i \ne j.$ That is because
\begin{eqnarray*}
& b_{i+1/2} \frac { \left( \frac{x_{i+1}}{1 - x_{i+1}} \right)^{\alpha_{i}}}{ \left( \frac{x_{i+1}}{1 - x_{i+1}} \right)^{\alpha_{i}} -                                                             \left( \frac{x_{i}}{1 - x_{i}} \right)^{\alpha_{i}}} = a_{i+1/2}\alpha_{i}\frac { \left( \frac{x_{i+1}}{1 - x_{i+1}} \right)^{\alpha_{i}}}
{ \left( \frac{x_{i+1}}{1 - x_{i+1}} \right)^{\alpha_{i}} - \left( \frac{x_{i}}{1 - x_{i}} \right)^{\alpha_{i}}} \\
& = a_{i+1/2}\frac{\alpha_{i}}{1-{\overline{x}}_{i}^{\alpha_{i}}}>0, \; 0<{\overline{x}}_{i} = \frac{x_{i}}{x_{i+1}}\frac{1-x_{i+1}}{1 - x_{i}} < 1
\end{eqnarray*}
for each $ i=1,2,\dots,N-1 $ and each $b_{i+1/2} \not = 0$. We have used that $1-{\overline{x}}_{i}^{\alpha_{i}}$ has just the sign of $\alpha_{i} $
and this is also true for $ b_{i+1/2} \to 0$. Now it is clear that $A_{i} = - \theta e_{i,i-1}$ and $C_{i} = - \theta e_{i,i+1}$ are negative.

We should also note that $B_{i}$ is always positive since $\Delta t_m$ is small. The situation is different for $B_{0}$, $C_{0}$, $A_{1}$, $B_{1}$, $C_{1}$
and $A_{N-1}$, $B_{N-1}$, $C_{N-1}$, $A_{N}$, $B_{N}$. From the first three equations we find

\begin{eqnarray*}
& u_{h0}^{m+1} = \frac{F_{0}}{B_{0}} - \frac{C_{0}}{B_{0}} u_{h1}^{m+1}, \;
u_{h1}^{m+1} = \frac{\triangle_{1}}{\triangle} - \frac{C_{1}}{\triangle} u_{h2}^{m+1}, \\
& \triangle = B_{1} - \frac{A_{1}}{B_{0}} C_{0}, \; \triangle_{1} = F_{1} - \frac{A_{1}}{B_{0}} F_{0}, \\
& {\widetilde {B}}_{2} u_{h2}^{m+1} + C_{2} u_{h3}^{m+1} = {\widetilde {F}}_{2}, \\
& {\widetilde {B}}_{2} = B_{2} - \frac{A_{2} C_{1}}{\triangle}, \; {\widetilde {F}}_{2} = F_{2} - \frac{\triangle_{1}}{\triangle} A_{2}.
\end{eqnarray*}

It is easily to see that when $ \triangle > 0 $ and $ \triangle = O \left( \frac{1}{\Delta t_m} \right) $ then $ B_{2} = O \left( \frac{1}{\Delta t_m} \right) $
for small $ \Delta t_m $. Therefore $ {\widetilde {B}}_{2} = O \left(\frac{1}{\Delta t_m} \right)$ and ${\widetilde {B}}_{2} > \vert C_{2} \vert $.

In a similar way one can eliminate $ u_{hN-1}^{m+1}$ and $u_{hN}^{m+1} $. As a result we obtain a system of linear algebraic equations with unknowns
$ u_{h2}^{m+1},\dots,u_{hN-2}^{m+1} $ and coefficients matrix that is an $M$-matrix.

While $F_{3},...,F_{N-3}$ are non-negative, we have to prove that $\widetilde {F}_{2}$ and $\widetilde {F}_{N-2}$ are also non-negative. From the formula for $\widetilde {F}_{2}$ it follows that when $\Delta t_m$ is small $\widetilde {F}_{2}$ is non-negative since $F_{2}=O \left( \frac{1} {\Delta t_m}  \right) $ and ${ \triangle },{ \triangle_{1} }$ are of the same order with respect to $\Delta t_m$. $\widetilde {F}_{N-2}$ is handled by the same way as $\widetilde {F}_{2}$ and also considered non-negative.

Since the load vector $( \widetilde {F}_{2},F_{3}, \dots ,F_{N-3},\widetilde {F}_{N-2} )$ is non-negative and the corresponding matrix is an M-matrix we can conclude that $ u_{h2}^{m+1},\dots,u_{hN-2}^{m+1} $ are non-negative. Finally, using the formulas for $u_{h0}^{m+1},u_{h1}^{m+1},u_{hN-1}^{m+1},u_{hN}^{m+1}$ one can easily check that they are non-negative too if $\Delta t_m$ is small.
\end{proof}

\begin{remark}
Theorem \ref{Mmatrix} shows that the fully-discretized system \eqref{FullDForm} satisfies the discrete maximum principle and therefore the above discretization is monotone. This guarantees the following: for a non-negative initial function $u_0(x)$ the numerical solution $\textbf{u}_{h}^{m}$, obtained via this method, is also non-negative as expected, because the price of the option is a non-negative number.
\end{remark}

\section{Numerical experiments}

Numerical experiments, presented in this section, illustrate the properties of the constructed method. We solve numerically various European Test Problems (TP) with different final (initial) conditions and different choices of parameters.

\begin{enumerate}
  \item ($TP1$). \emph{Call option} with final condition \eqref{FinalC1}. Parameters: $S_{max}=700$, $T=1$, $r=0.1$, $\sigma=0.3$, $d=0.04$ and $E=400$.
  \item ($TP2$). \emph{Call option} with cash-or-nothing payoff $V(S,T)=H(S-E), \; S \in (0,\infty)$, where $H$ denotes the Heaviside function.
Parameters: $S_{max}=700$, $T=1$, $r=0.1$, $\sigma=0.4$, $d=0.04$ and $E=400$.
  \item ($TP3$). \emph{Call option} with final condition \eqref{FinalC1}. Parameters: $S_{max}=700$, $T=1$, $r=0.1+0.02sin(10Tt)$, $\sigma=0.4$, $d=0.06S/(S+E)$ and $E=400$.
  \item ($TP4$). \emph{A portfolio of options}. Combinations of different options have step final conditions such as the 'bullish vertical spread' payoff, defined in \eqref{FinalC2}. In this example, we assume that the final condition is a 'butterfly spread' delta function, defined by
\begin{equation*}
V(S,T)=\left\{
       \begin{array}{ll}
         1 , & S \in (S_1,S_2), \\
         -1 , & S \in (S_2,S_3), \\
         0 , & otherwise,
       \end{array}
     \right.
\end{equation*}
which arises from a portfolio of three types of options with different exercise prices \cite{WHD}. Parameters: $S_{max}=100$, $T=1$, $S_1=40$, $S_2=50$, $S_3=60$, $r=0.1+0.02 sin(10Tt)$, $\sigma=0.4$, $d=0.06S/(S+E)$ and $P_m=50$.
\end{enumerate}

In the tables below are presented the computed $C$ and $L_2$ mesh norms of the error $E=u_h-u$ by the formulas
\begin{equation*}
\left\|E \right\|_C =\mathop{\max}\limits_{i}\Vert{u_{hi}^{N_t} - u_i^{N_t}} \Vert, \; \left\|E \right\|_{L_2}=\sqrt{\sum\limits_{i = 0}^N {l_i \left({u_{hi}^{N_t} - u_i^{N_t}}\right)^2}}.
\end{equation*}
The rate of convergence (RC) is calculated using double mesh principle
\begin{equation*}
RC=\log_{2}(E^{N}/E^{2N}),\;\; E^{N}=\|u_h^{N}-u^N\|,
\end{equation*}
where $\|.\|$ is the mesh $C$-norm or $L_{2}$-norm, $u^N$ and $u_h^{N}$ are respectively the exact solution and the numerical solution, computed at the mesh with $N$ subintervals. We choose the weight parameter with respect to the time variable $\theta=0.5$.

In Table \ref{t1} we show the convergence and the accuracy of the constructed scheme, where we numerically solve the model problem with the known exact solution $u(x,t)=\exp(x-t)$ and initial data $u_0(x)=\exp(x)$. We select this function because it's feature is similar to that of the exact solution to the call option problem. The results, corresponding to problems $TP1$ and $TP3$ with $\Delta t_m=0.001, m=0,\dots,N_t-1$, are listed in Table \ref{t1}.

\begin{table}[h]
\caption{}%
\label{t1}
\begin{tabular*}{\textwidth}{@{\extracolsep{\fill}}rcccccccccc}
\hline\\[-0.1in]
&&\multicolumn{4}{c}{$TP1$}
&&\multicolumn{4}{c}{$TP3$}
\\[0.05in]\cline{3-6}\cline{8-11}\\[-0.1in]
\multicolumn {1}{c}{$N$}
&&\multicolumn {1}{c}{$E^N_\infty$}& $RC$& $E^N_2$& $RC$
&&\multicolumn {1}{c}{$E^N_\infty$}& $RC$& $E^N_2$& $RC$
\\[0.03in]
\hline \hline \\[-0.1in]
80 && 3.455e-3 & - & 2.801e-4 & - && 4.805e-3 & - & 3.914e-4 & - \\
160 && 1.729e-3 & 0.998 & 9.914e-5 & 1.498 && 2.405e-3 & 0.998 & 1.385e-4 & 1.498 \\
320 && 8.650e-4 & 0.999 & 3.507e-5 & 1.499 && 1.203e-3 & 0.999 & 4.900e-5 & 1.499 \\
640 && 4.326e-4 & 0.999 & 1.240e-5 & 1.499 && 6.015e-4 & 0.999 & 1.733e-5 & 1.499 \\
\end{tabular*}
\end{table}

Figures \ref{f1}-\ref{f4} illustrate the numerical solution, computed with $\tau=0.0001$ on an uniform mesh $N=320$ for $TP1$ and $TP3$, and the well-known solution of the classical Black-Scholes equation, computed by the financial toolbox of MATLAB, \emph{blsprice(Price, Strike, Rate, Time, Volatility)}. Comparison results for $TP3$, where $N=40$ and $T=1$, are given in Figure \ref{f4}, while the numerical solution is visualized in Figure \ref{f3}.

\begin{figure}[htbp]
\hfill%
\begin{minipage}[b]{0.50\textwidth}
\centering
\includegraphics[width=\textwidth]{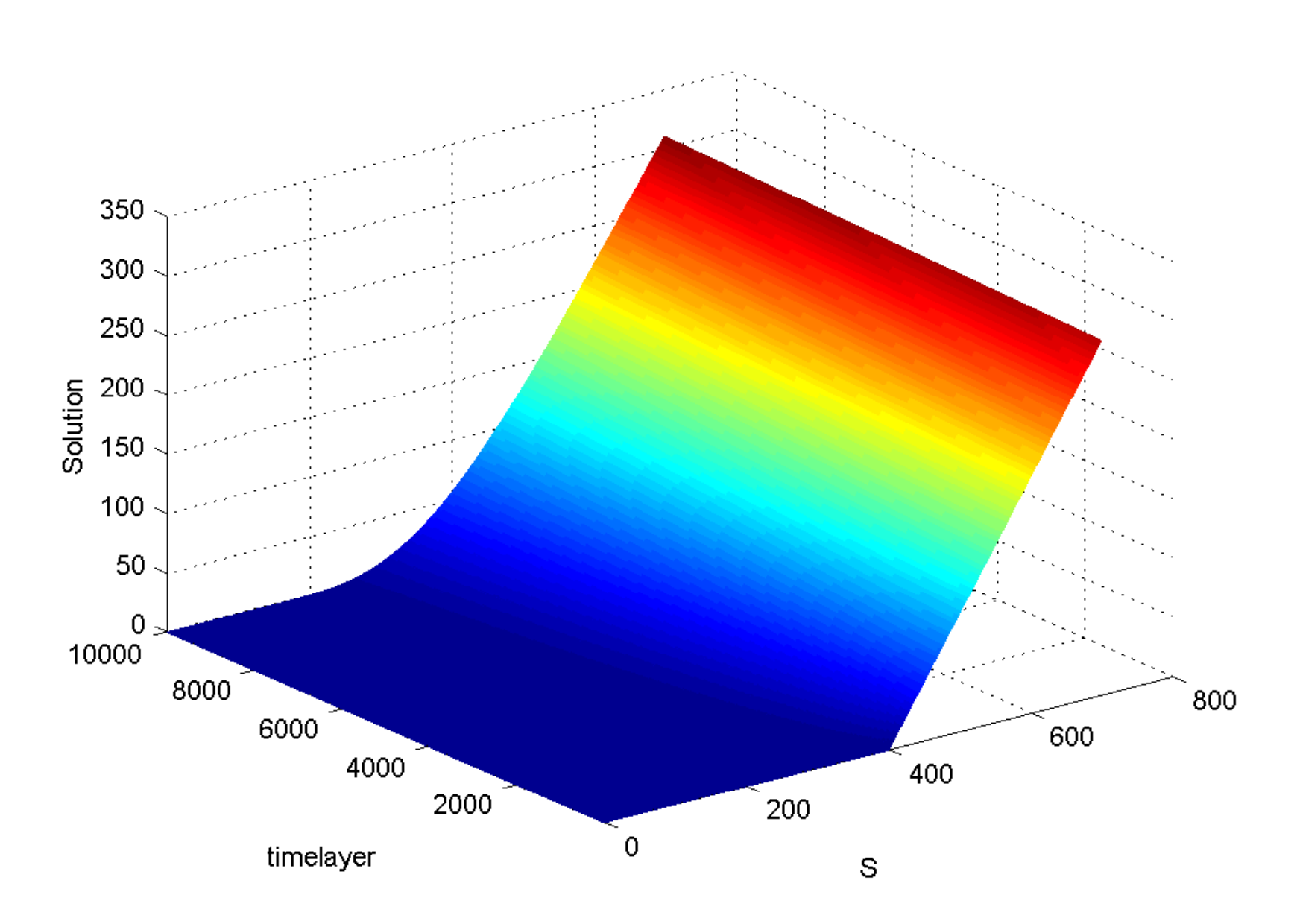}
\caption{Numerical Solution $TP1$}\label{f1}
\end{minipage}%
\hfill%
\begin{minipage}[b]{0.50\textwidth}
\centering
\includegraphics[width=\textwidth]{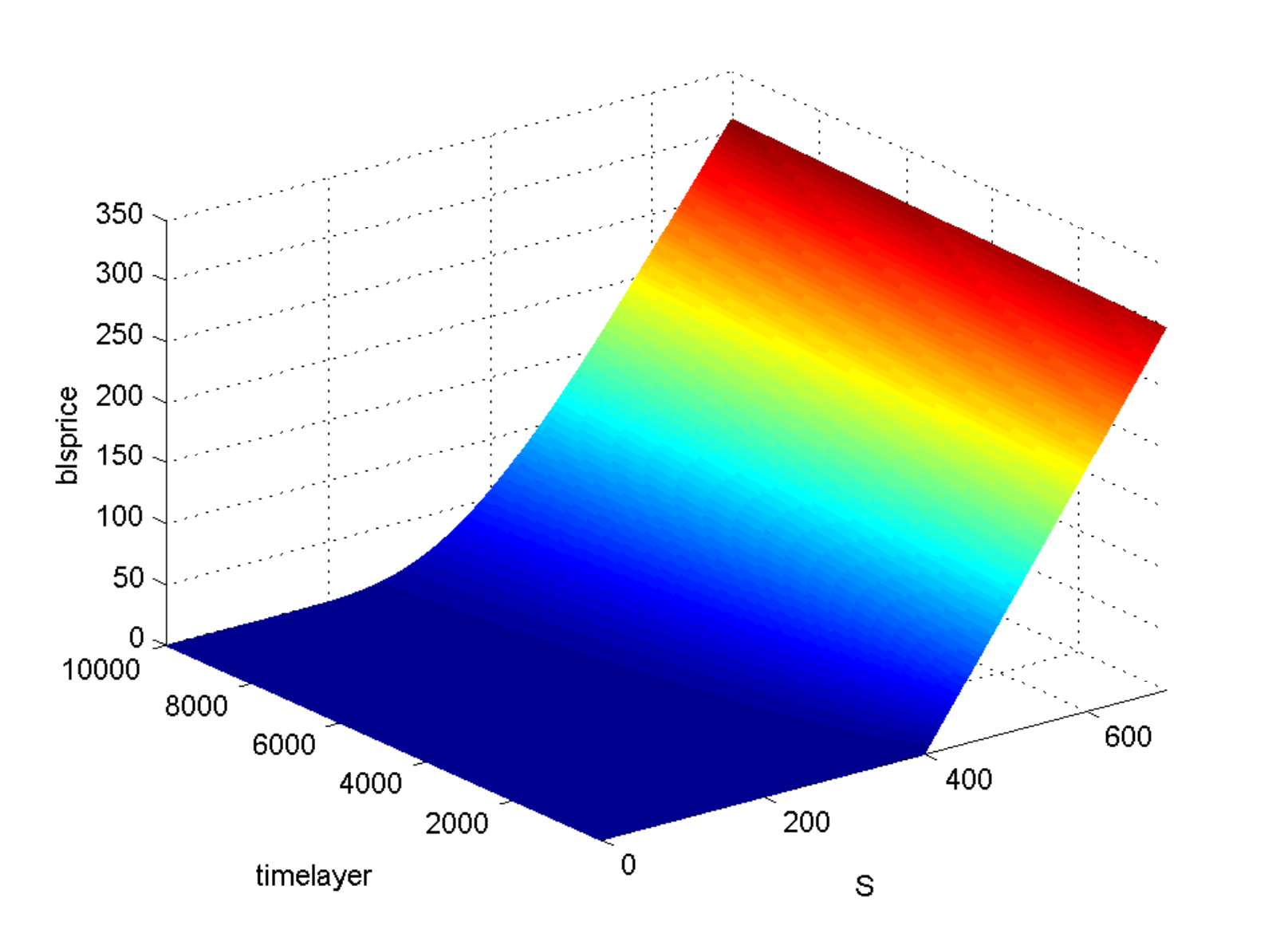}
\caption{Black-Scholes Solution $TP1$}\label{f2}
\end{minipage}\hfill\hbox{}%
\end{figure}

\begin{figure}[htbp]\label{f2}
\hfill%
\begin{minipage}[b]{0.50\textwidth}
\centering
\includegraphics[width=\textwidth]{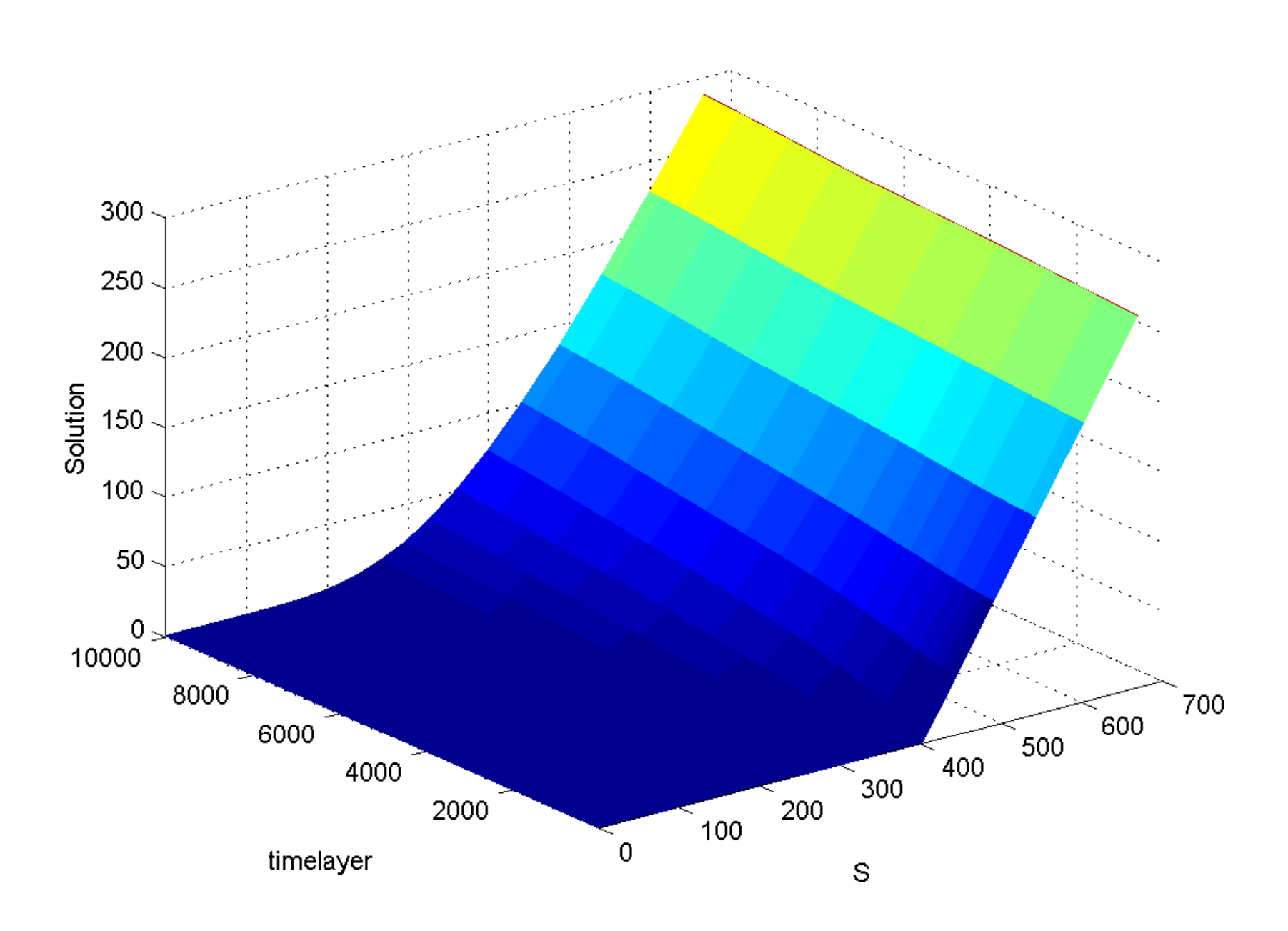}
\caption{Numerical Solution $TP3$}\label{f3}
\end{minipage}%
\hfill%
\begin{minipage}[b]{0.50\textwidth}
\centering
\includegraphics[width=\textwidth]{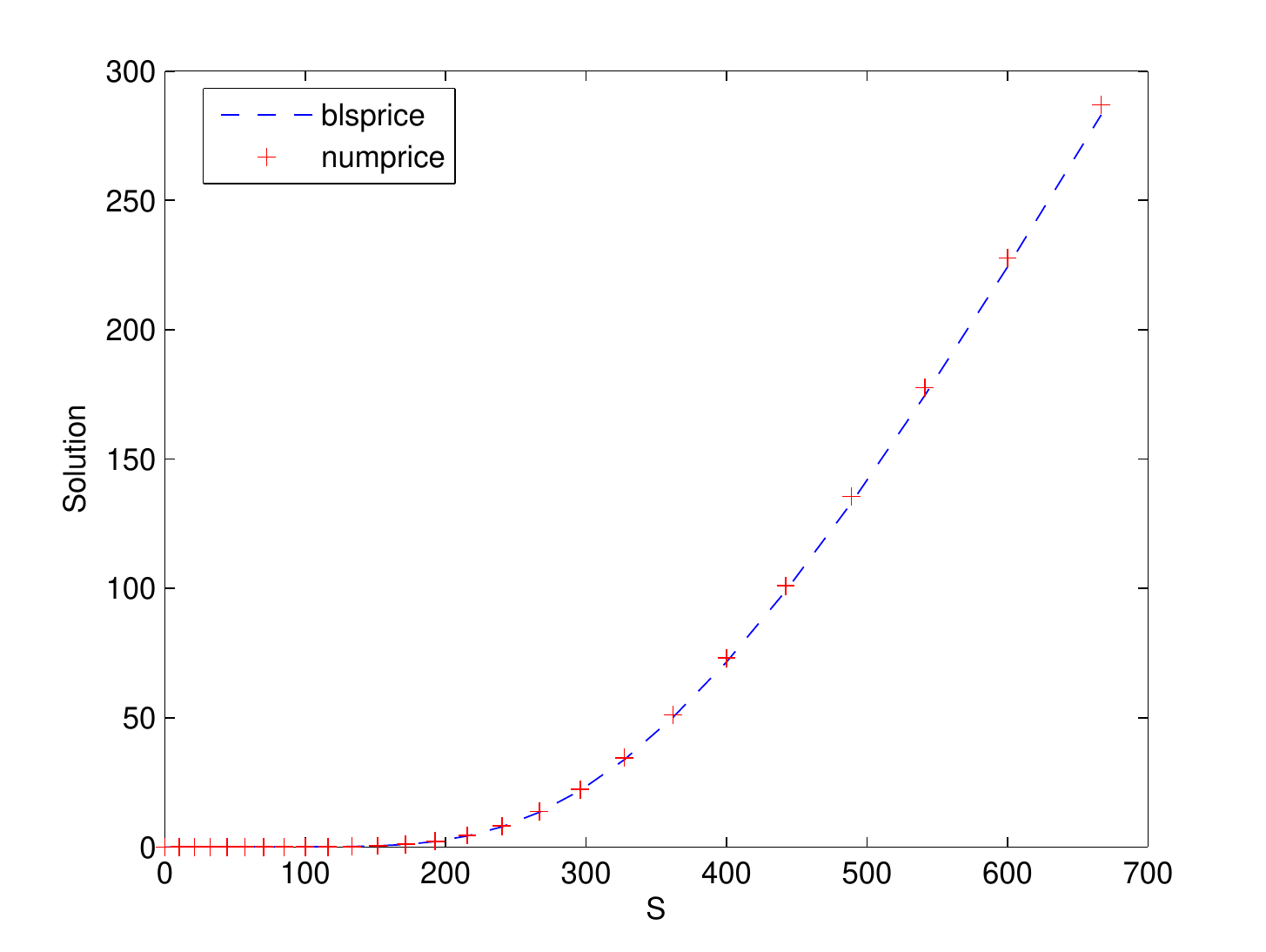}
\caption{Comparison $TP3$}\label{f4}
\end{minipage}\hfill\hbox{}%
\end{figure}

In Table \ref{t2} the results are obtained by computations on a \emph{power-graded} mesh for the same values of the parameters and exact solution. This mesh takes into account the \emph{degeneration at the both ends} of the interval $(0,1)$ and is given by (in the current case $p=2$)
\begin{eqnarray*}
& x_{i+1}=x_{i} + \left( i \left( 2 \sum_{i=0}^{N/2} i^p \right)^{-1/p} \right)^{p}, \; i=1,\dots,N/2, \\
& x_{i+1}=x_{i} + \left( (N+1-i) \left( 2 \sum_{i=0}^{N/2} i^p \right)^{-1/p} \right)^{p}, \; i=N/2+1,\dots,N.
\end{eqnarray*}
The time step $\Delta t_m$ is chosen such that $\Delta t_m = \min_{0 \leq i \leq N} h(i),m=0,\dots,N_t-1$ with $T=0.1$.

\begin{table}[h]
\caption{}%
\label{t2}
\begin{tabular*}{\textwidth}{@{\extracolsep{\fill}}rcccccccccc}
\hline\\[-0.1in]
&&\multicolumn{4}{c}{$TP1$}
&&\multicolumn{4}{c}{$TP3$}
\\[0.05in]\cline{3-6}\cline{8-11}\\[-0.1in]
\multicolumn {1}{c}{$N$}
&&\multicolumn {1}{c}{$E^N_\infty$}& $RC$& $E^N_2$& $RC$
&&\multicolumn {1}{c}{$E^N_\infty$}& $RC$& $E^N_2$& $RC$
\\[0.03in]
\hline \hline \\[-0.1in]
20 && 7.154e-4 & - & 3.648e-4 & - && 6.263e-4 & - & 3.914e-4 & - \\
40 && 1.880e-4 & 1.927 & 9.525e-5 & 1.947 && 1.650e-4 & 1.924 & 8.341e-5 & 2.230 \\
80 && 4.818e-5 & 1.964 & 2.437e-5 & 1.966 && 4.226e-5 & 1.964 & 2.134e-5 & 1.966 \\
160 && 1.220e-5 & 1.982 & 6.167e-6 & 1.982 && 1.970e-5 & 1.982 & 5.401e-6 & 1.982 \\
\end{tabular*}
\end{table}

We now compute the solutions of the original models $TP2$ and $TP3$. As exact solution we use the numerical solution on a very fine uniform grid, i.e. $N=5120$ with $\Delta t_m=0.0001,m=0,\dots,N_t-1$. The results are given in Table \ref{t3}. The numerical solutions of $TP2$ and $TP4$ for $N=640$ are plotted in Figures \ref{f5} and \ref{f6}.

\begin{table}[h]
\caption{}%
\label{t3}
\begin{tabular*}{\textwidth}{@{\extracolsep{\fill}}rcccccccccc}
\hline\\[-0.1in]
&&\multicolumn{4}{c}{$TP2$}
&&\multicolumn{4}{c}{$TP3$}
\\[0.05in]\cline{3-6}\cline{8-11}\\[-0.1in]
\multicolumn {1}{c}{$N$}
&&\multicolumn {1}{c}{$E^N_\infty$}& $RC$& $E^N_2$& $RC$
&&\multicolumn {1}{c}{$E^N_\infty$}& $RC$& $E^N_2$& $RC$
\\[0.03in]
\hline \hline \\[-0.1in]
80 && 2.914e-7 & - & 1.112e-7 & - && 2.681e-3 & - & 2.171e-4 & - \\
160 && 9.914e-8 & 1.555 & 2.841e-8 & 1.968 && 1.321e-3 & 1.021 & 7.476e-5 & 1.538  \\
320 && 5.047e-8 & 0.974 & 7.386e-9 & 1.943 && 6.393e-4 & 1.046 & 2.544e-5 & 1.554 \\
640 && 2.545e-8 & 0.987 & 1.973e-9 & 1.904 && 2.984e-4 & 1.099 & 8.374e-6 & 1.603 \\
1280 && 1.269e-8 & 1.003 & 5.42e-10 & 1.864 && 1.279e-4 & 1.222 & 2.534e-6 & 1.724 \\
\end{tabular*}
\end{table}

\begin{figure}[htbp]
\hfill%
\begin{minipage}[b]{0.50\textwidth}
\centering
\includegraphics[width=\textwidth]{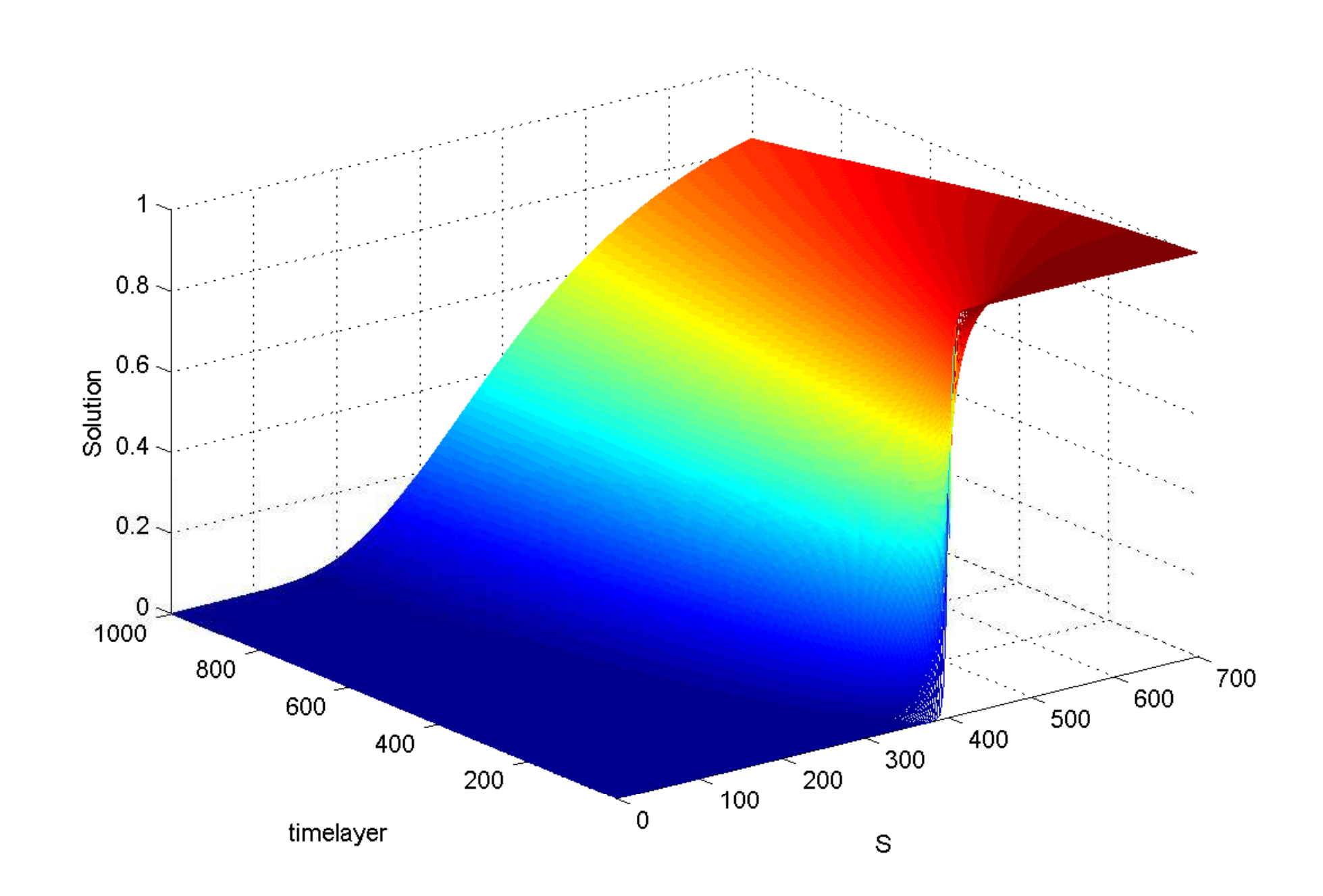}
\caption{Test Problem 2}\label{f5}
\end{minipage}%
\hfill%
\begin{minipage}[b]{0.50\textwidth}
\centering
\includegraphics[width=\textwidth]{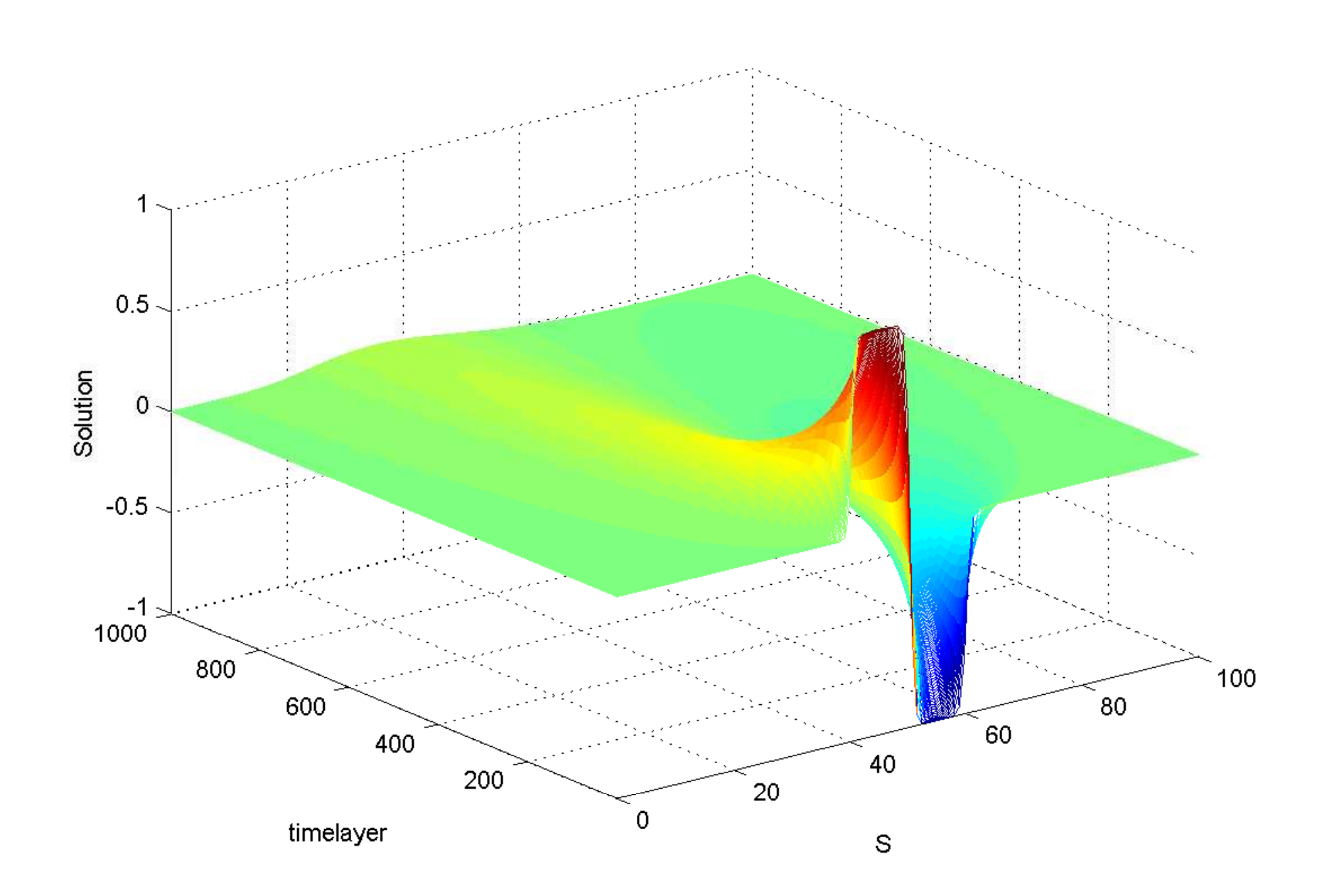}
\caption{Test Problem 4}\label{f6}
\end{minipage}\hfill\hbox{}%
\end{figure}

The convergence of the numerical solution, obtained by the method to the solution of the classical Black-Scholes equation, transformed by \eqref{ZhuTransform}, is given in Table \ref{t4}. The node $x_{N}=1$ is omitted in the calculations since it corresponds to the case $S=\infty$. We use the test parameters in $TP1$ with $d=0$ and $\Delta t_m=0.0001, m=0,\dots,N_t-1$. In the columns 2-5 of Table \ref{t4} we show the overall rate of convergence, while in the last column is given the rate of convergence in the strong norm of the numerical solution in a random node of the mesh, i.e. the one, corresponding to $S=600$. The experiments are performed on an uniform mesh.

\begin{table}[h]
\caption{}%
\label{t4}
\begin{tabular*}{\textwidth}{@{\extracolsep{\fill}}rcccccccc}
\hline\\[-0.1in]
&&\multicolumn{4}{c}{$TP1$}
&&\multicolumn{2}{c}{$S=600$}
\\[0.05in]\cline{3-6}\cline{8-9}\\[-0.1in]
\multicolumn {1}{c}{$N$}
&&\multicolumn {1}{c}{$E^N_\infty$}& $RC$& $E^N_2$& $RC$
&&\multicolumn {1}{c}{$E^N_\infty$}& $RC$
\\[0.03in]
\hline \hline \\[-0.1in]
80 && 3.7473e-4 & - & 6.7765e-5 & - && 1.8848e-5 & - \\
160 && 1.8939e-4 & 0.985 & 2.0388e-5 & 1.733 && 4.7877e-6 & 1.977 \\
320 && 9.5196e-5 & 0.992 & 6.4913e-6 & 1.651 && 1.2016e-6 & 1.994 \\
640 && 4.7722e-5 & 0.996 & 2.1574e-6 & 1.589 && 3.0070e-7 & 1.999 \\
1280 && 2.3892e-5 & 0.998 & 7.3723e-7 & 1.549 && 7.5196e-8 & 1.999 \\
\end{tabular*}
\end{table}

The benchmark of numerical methods in computational finance is the Crank-Nicolson second-order centered space difference scheme (CSDS). It is well known that it produces \emph{spurious oscillations} \cite{AP,Duffy,RS} in the solution and it's spatial derivatives, i.e. $\Delta=\partial V/\partial S$, that are financially unrealistic and are not tolerable. The Figures \ref{osc1} and \ref{osc2} illustrate the problem for the vanilla call option $TP1$, computed on an uniform mesh for $\tau=0.0001$ and parameters $S_{max}=700$, $T=1$, $r=0.1$, $\sigma=0.01$, $d=0$ and $E=400$. We compare the MATLAB function, \emph{blsdelta(Price, Strike, Rate, Time, Volatility)}, with the first derivative of the numerical solution. In the Figures \ref{noosc1} and \ref{noosc2}, generated by the our difference scheme (ODS), no oscillations are observed.

\begin{figure}[htbp]
\hfill%
\begin{minipage}[b]{0.50\textwidth}
\centering
\includegraphics[width=\textwidth]{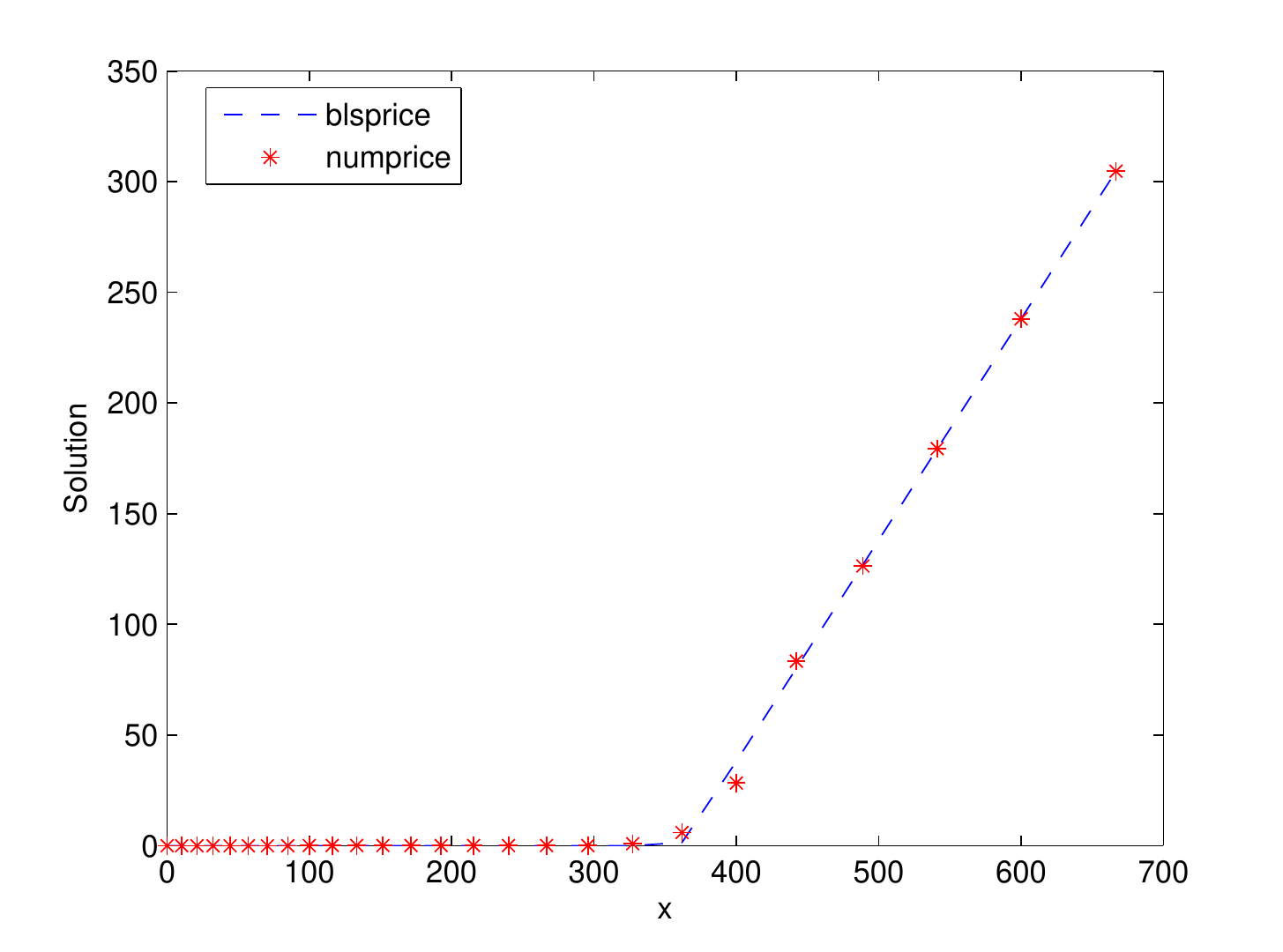}
\caption{Black-Scholes price CSDS}\label{osc1}
\end{minipage}%
\hfill%
\begin{minipage}[b]{0.50\textwidth}
\centering
\includegraphics[width=\textwidth]{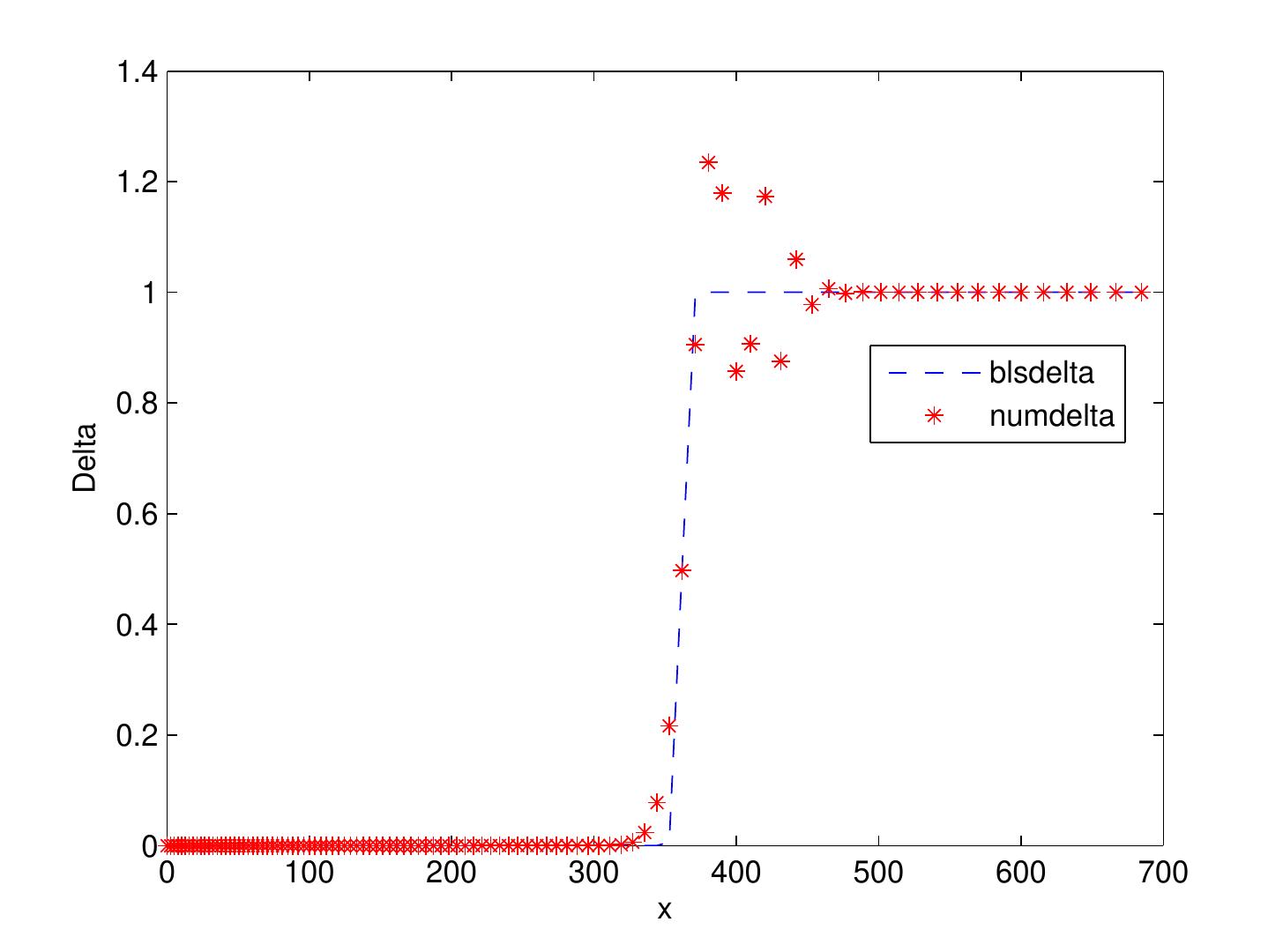}
\caption{Black-Scholes $\Delta$ CSDS}\label{osc2}
\end{minipage}\hfill\hbox{}%
\end{figure}

\begin{figure}[htbp]
\hfill%
\begin{minipage}[b]{0.50\textwidth}
\centering
\includegraphics[width=\textwidth]{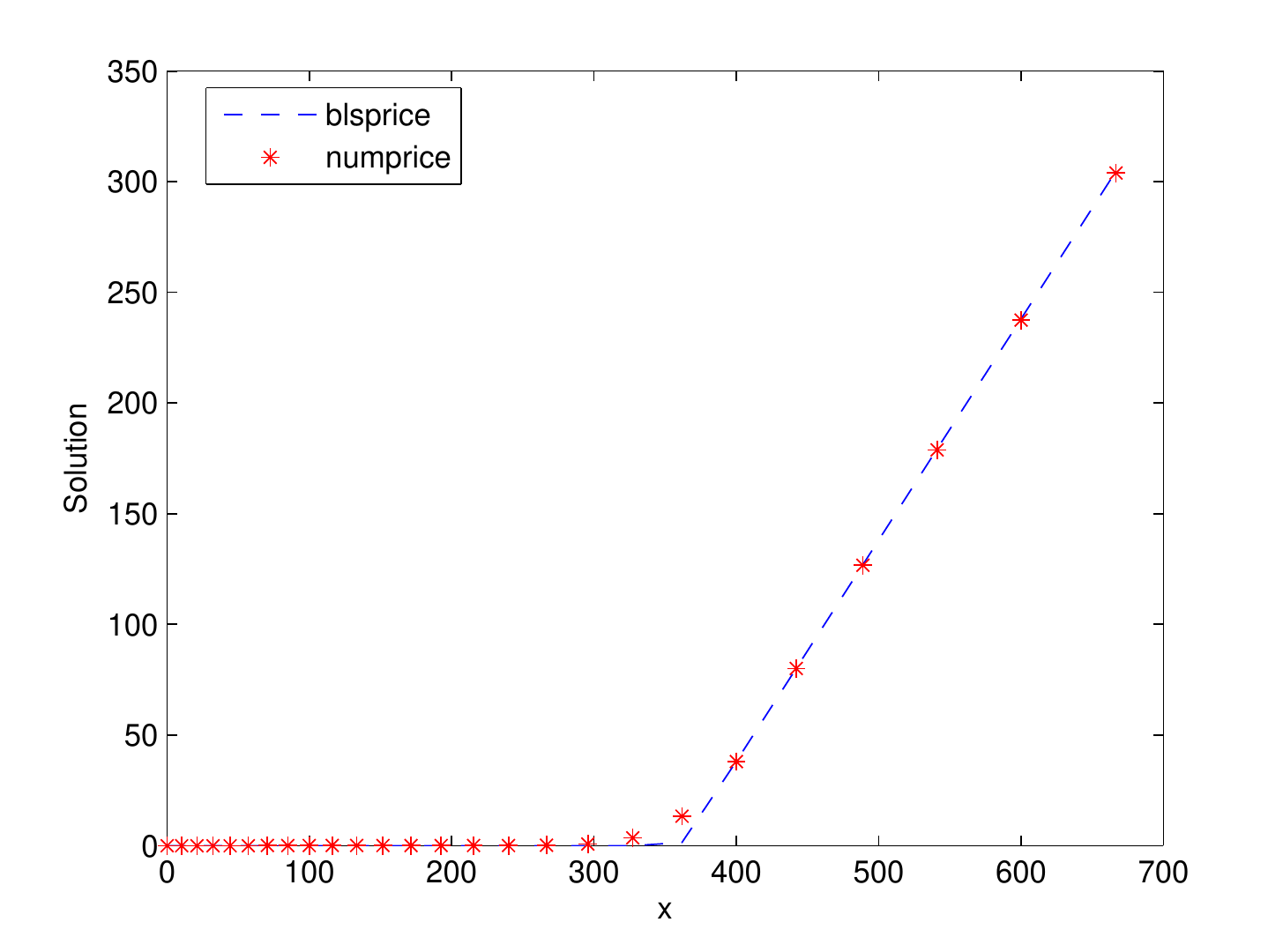}
\caption{Black-Scholes price ODS}\label{noosc1}
\end{minipage}%
\hfill%
\begin{minipage}[b]{0.50\textwidth}
\centering
\includegraphics[width=\textwidth]{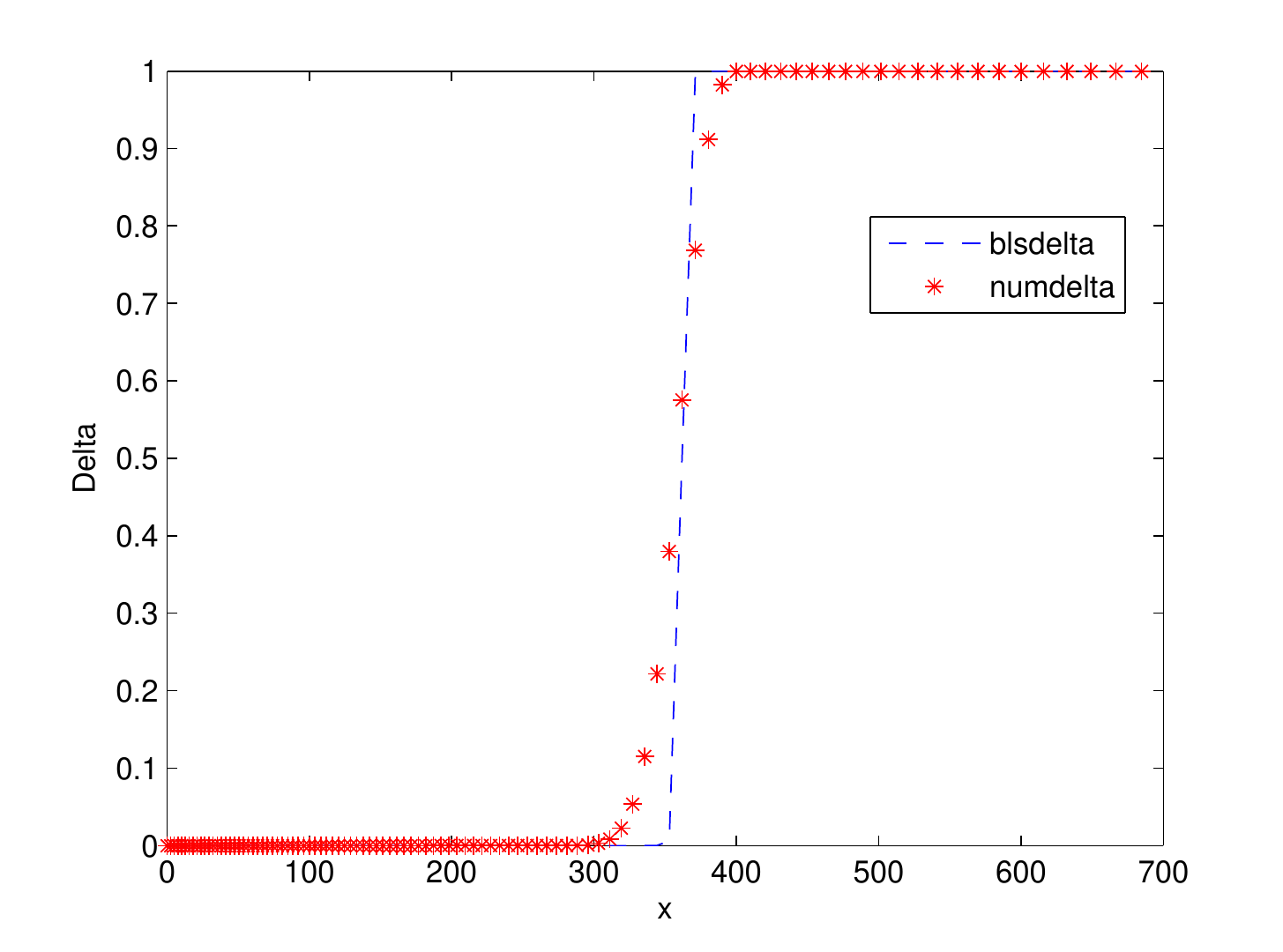}
\caption{Black-Scholes $\Delta$ ODS}\label{noosc2}
\end{minipage}\hfill\hbox{}%
\end{figure}

A realistic situation in financial engineering occurs when the convection and diffusion terms have opposite signs. For example, situations, similar to $r-d<0$, arise in the bond-pricing models, that are also of Black-Scholes type \cite{Duffy,Sevc,Zhu}, where the parameters are interpreted in different context. In Figures \ref{postp2} and \ref{nopostp2} we show the numerical solutions, generated by our difference scheme and by the second-order centered space difference scheme respectively, applied to \eqref{TransfEq}, for initial condition as in $TP2$ with parameters $T=2, \; \sigma=0.1, \; r=0, \; d=0.4x$ and $N=40, \; \tau=0.001$. In Figures \ref{postp3} and \ref{nopostp3} are given the numerical solutions for initial condition as in $TP3$ with parameters $T=2, \; \sigma=0.1, \; r=0, \; d=2x$ and $N=40, \; \tau=0.001$.

\begin{figure}[htbp]
\hfill%
\begin{minipage}[b]{0.50\textwidth}
\centering
\includegraphics[width=\textwidth]{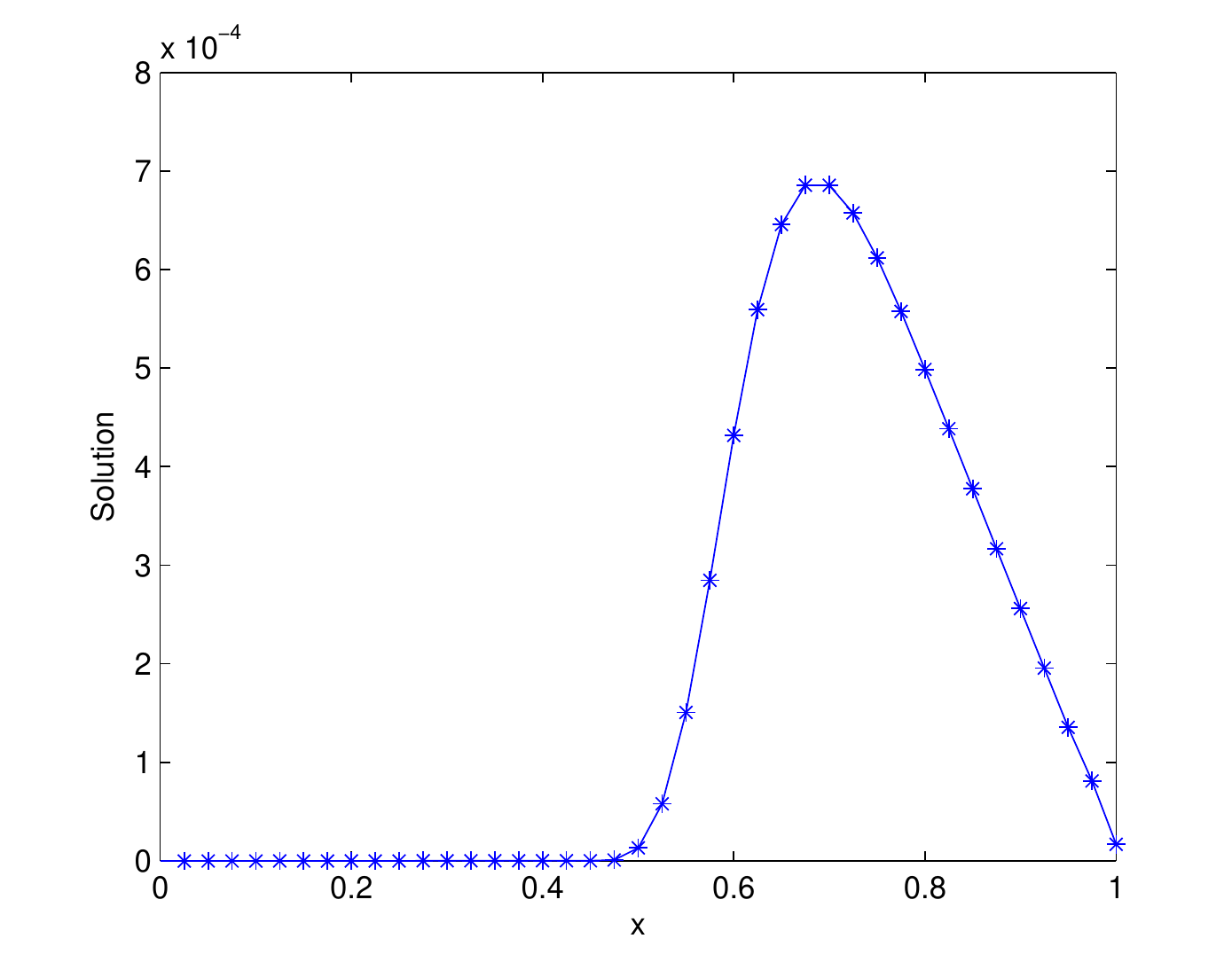}
\caption{Black-Scholes price ODS}\label{postp2}
\end{minipage}%
\hfill%
\begin{minipage}[b]{0.50\textwidth}
\centering
\includegraphics[width=\textwidth]{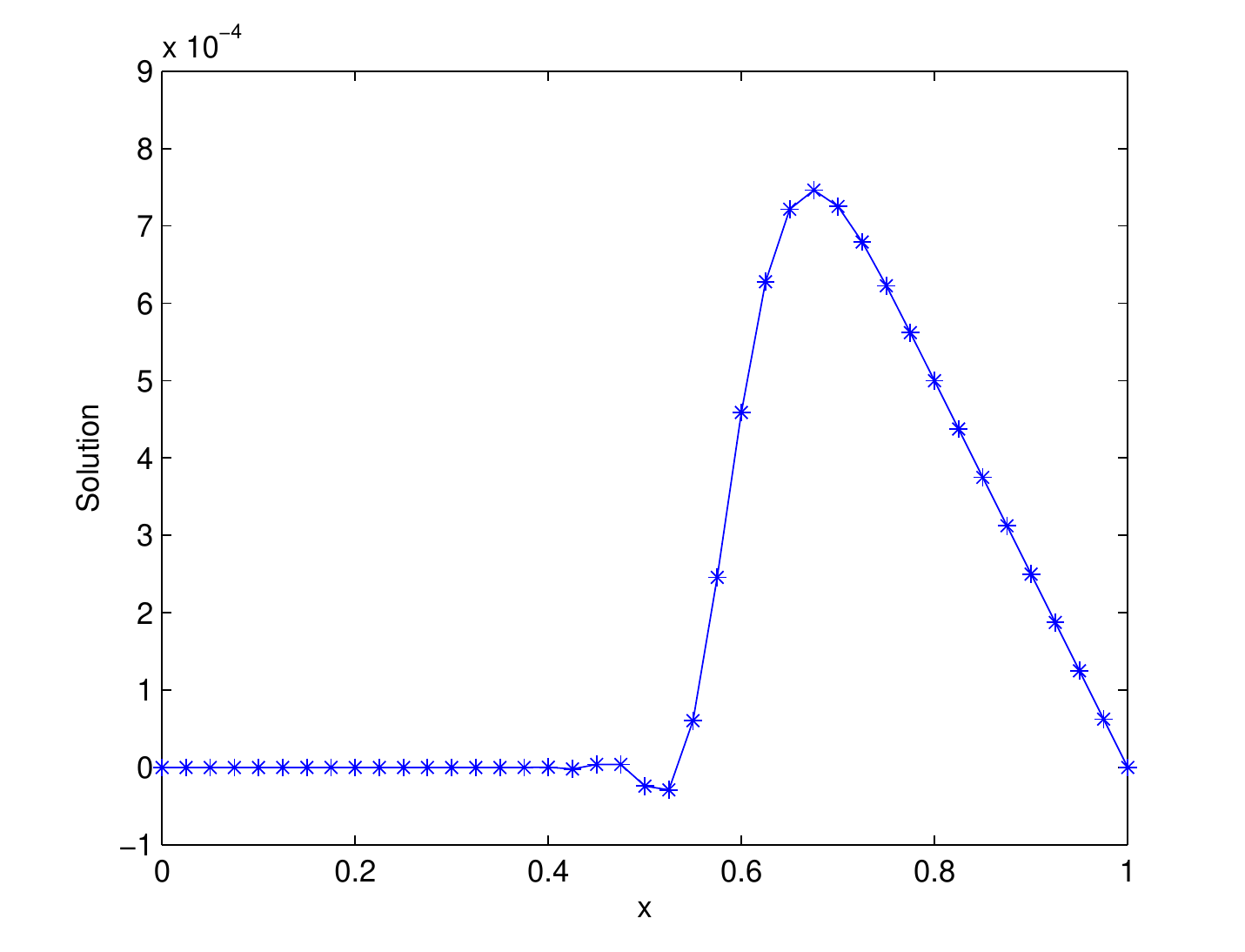}
\caption{Black-Scholes price CSDS}\label{nopostp2}
\end{minipage}\hfill\hbox{}%
\end{figure}

\begin{figure}[htbp]
\hfill%
\begin{minipage}[b]{0.50\textwidth}
\centering
\includegraphics[width=\textwidth]{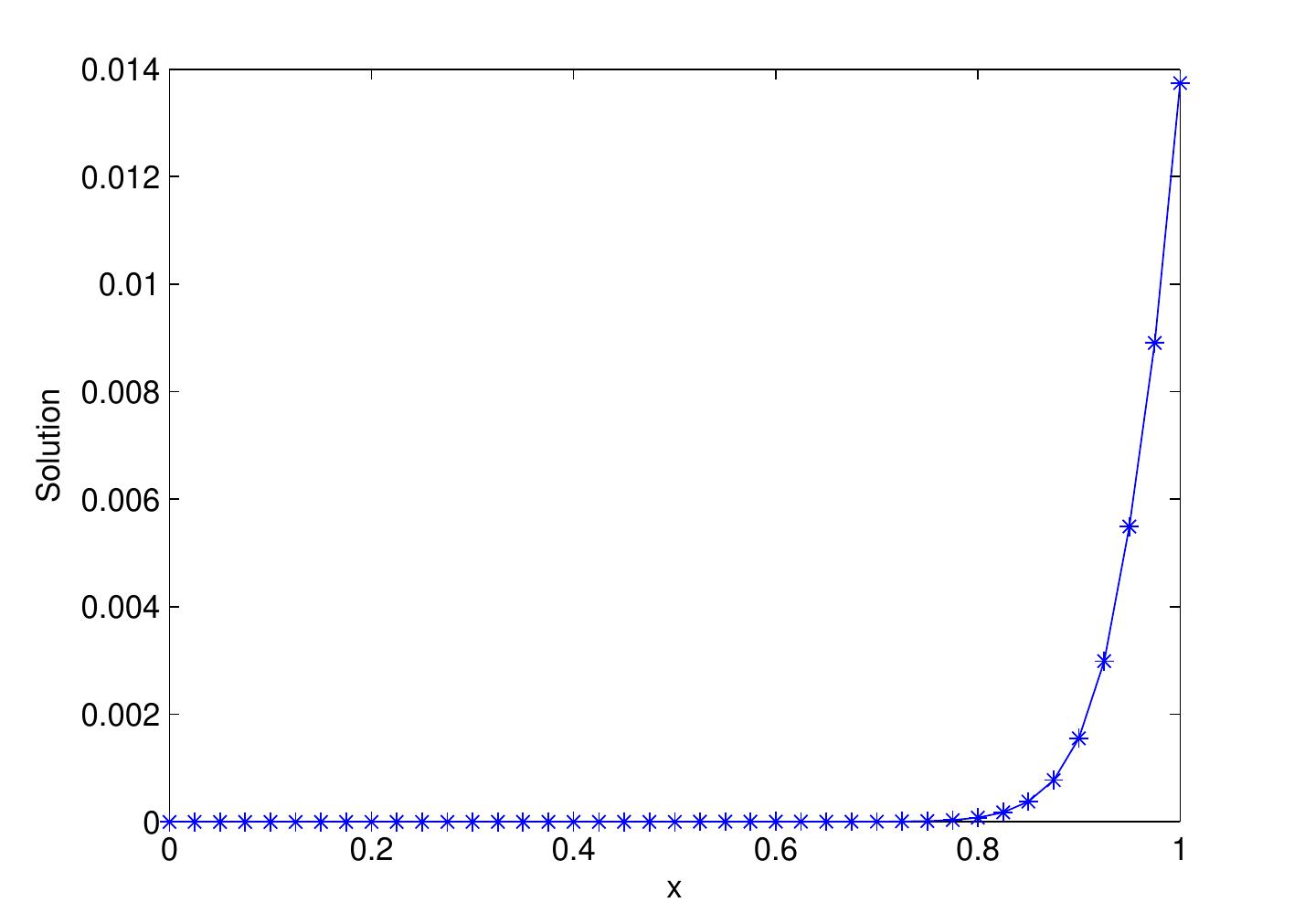}
\caption{Black-Scholes price ODS}\label{postp3}
\end{minipage}%
\hfill%
\begin{minipage}[b]{0.50\textwidth}
\centering
\includegraphics[width=\textwidth]{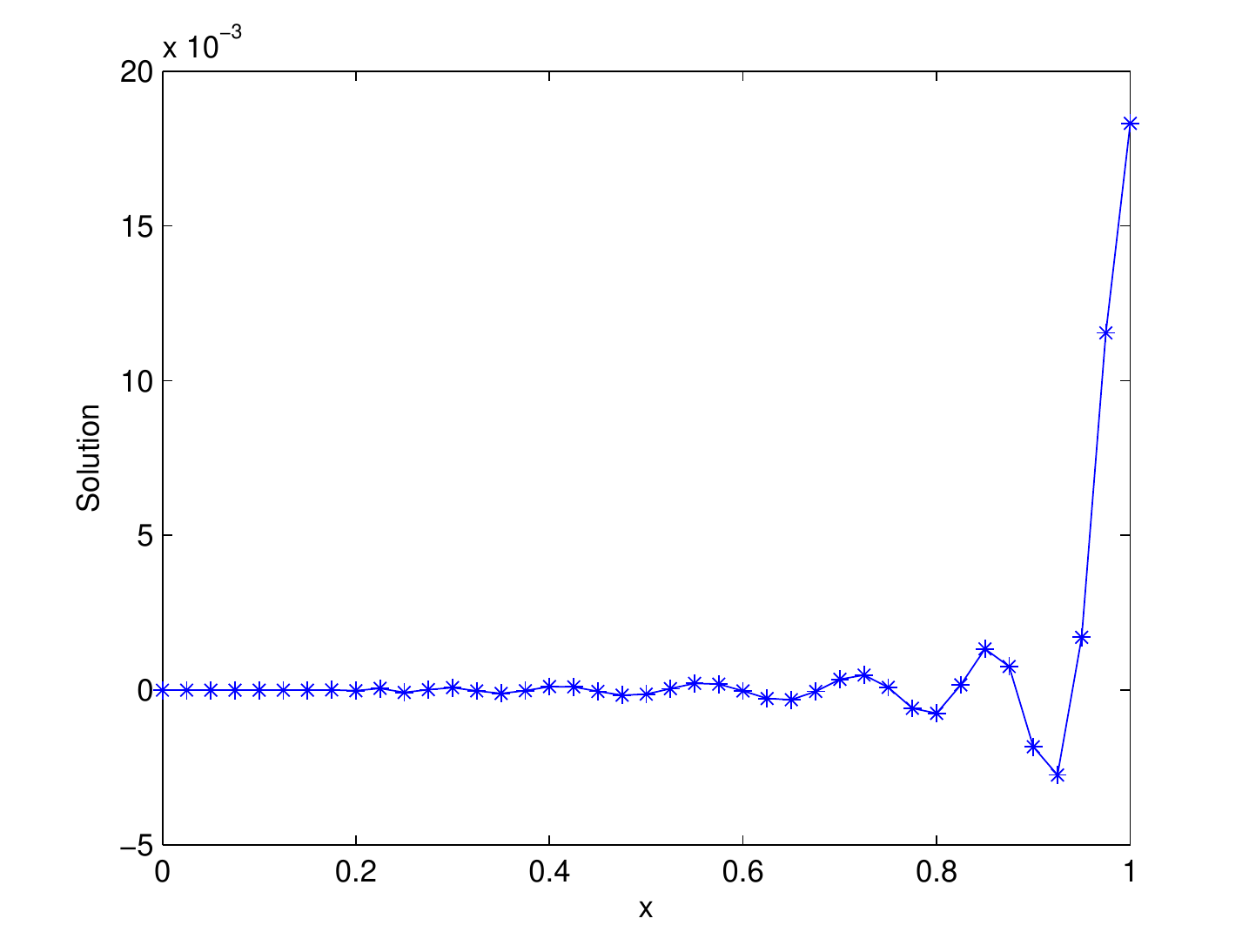}
\caption{Black-Scholes price CSDS}\label{nopostp3}
\end{minipage}\hfill\hbox{}%
\end{figure}

As seen in Figures \ref{postp2}-\ref{nopostp3}, monotonicity and stability are not guaranteed for the centered space difference scheme if the convection and diffusion coefficients are of different signs. Simple calculations show that the discrete maximum principle is violated. We do not observe such problems in the numerical solution, generated by our numerical method.

\section{Conclusion}

In this article we present a fitted FVM for the generalized Black-Scholes equation \eqref{OrigEq}. The method is applicable to more general Black-Scholes models, for example when $\sigma = \sigma(S,t)$ and $r=r(S,t)$. We may also use any interval $(0,l), \; l>0$ (here we took $l=1$ for simplicity) to solve the transformed problem. The main advantage of the developed numerical algorithm is reduction of the computational costs as well as positivity-preserving.

The conducted experiments show first order of convergence of the proposed scheme on a quasi-uniform mesh and second order of convergence on a particular graded mesh. Moreover, they also indicate better stability and unconditional (w.r.t. to the space step) monotonicity in comparison with other known schemes.

In a forthcoming paper we study the stability and the convergence of the proposed finite volume method.

\textbf{Acknowledgement:} The author is supported by the Bulgarian National Fund under Project DID 02/37/09 and by the European Social Fund through the Human Resource Development Operational Programme under contract BG051PO001-3.3.06-0052 (2012/2014).

\end{document}